\newtheorem{theorem}{Theorem}[section]
\newtheorem{lemma}[theorem]{Lemma}
\newtheorem{definition}[theorem]{Definition}
\newtheorem{corollary}[theorem]{Corollary}
\newtheorem{proposition}[theorem]{Proposition}
\newcommand{\qed}{\rule{1mm}{3mm}}     
\newcommand{\N}{{\mathbb N}}
\newcommand{\R}{{\mathbb R}}
\newcommand{\oder}{\;\vee\;}
\newenvironment{proof}{\vspace*{\parsep}\noindent {\bf proof:}}{\qed\\[1em]}
\newcommand{\CZF}{CZF}
\newcommand{\voll}[2]{{\mathbf{mv}}(\phantom{}^{#1}{#2})}
\newcommand{\hoch}[2]{\phantom{}^{#1}{#2}}
\newcommand{\Fun}{{\mathbf{Fun}}}
\newcommand{\dom}{{\mathbf{dom}}}
\newcommand{\ran}{{\mathbf{ran}}}
 \newcommand{\ini}{\in}
 \newcommand{\inn}{\in}
 \newcommand{\paar}[1]{\langle #1\rangle}
 \newcommand{\ttt}{\em }
\begin{document}

\title{On the Constructive Dedekind Reals}
\author{Robert S. Lubarsky\\
{\small Department of Mathematical Sciences, Florida Atlantic University}\\
{\small 777 Glades Road Boca Raton, FL 33431, USA} \\
{\tt \scriptsize rlubarsk@fau.edu}\\
  Michael
Rathjen\footnote{This material is based upon work supported by the
National
Science Foundation under Award No. DMS-0301162.}\\
{\small Department of Pure Mathematics, University of Leeds}\\
{\small Leeds LS2 9JT, United Kingdom} \\
{\tt \scriptsize rathjen@math.ohio-state.edu}}

\maketitle

\begin{abstract} In order to build the collection of Cauchy reals
as a set in constructive set theory, the only Power Set-like
principle needed is Exponentiation. In contrast, the proof that
the Dedekind reals form a set has seemed to require more than
that. The main purpose here is to show that Exponentiation alone
does not suffice for the latter, by furnishing a Kripke model of
constructive set theory, CZF with Subset Collection replaced by
Exponentiation, in which the Cauchy reals form a set while the
Dedekind reals constitute a proper class.
\end{abstract}

\section{Introduction}
In classical mathematics, one principal approach to defining the
real numbers is to use equivalence classes of Cauchy sequences of
rational numbers, and the other is the method of Dedekind cuts
wherein reals appear as subsets of $\mathbb Q$ with special
properties. Classically the two methods are equivalent in that the
resulting field structures are easily shown to be isomorphic. As
often happens in an intuitionistic setting, classically equivalent
notions fork. Dedekind reals give rise to several demonstrably
different collections of reals when only intuitionistic logic is
assumed (cf. \cite{troelstra}, Ch.5, Sect.5). Here we shall be
concerned with the most common and fruitful notion of Dedekind
real which crucially involves the (classically superfluous)
condition of locatedness of cuts. These Dedekind reals are
sometimes referred to as the {\em constructive Dedekind reals} but
we shall simply address them as the {\em Dedekind reals}. Even in
 intuitionistic set theory,  with a
little bit of help from the countable axiom of choice
(${\mathbf{AC}}({\mathbb N},2)$\footnote{$\forall\,
r\subseteq{\mathbb N}\times2[\forall n\in{\mathbb N}\,\exists i\in
\{0,1\}\;\langle n,i\rangle\in r\;\rightarrow\;\exists f:{\mathbb
N}\to 2\;\forall n\in{\mathbb N}\;\langle n,f(n)\rangle\in r]$.}
suffices; see \cite{ac4}, 8.25),
 ${\mathbb R}^d$ and ${\mathbb R}^c$ are isomorphic
 (where ${\mathbb R}^d$ and ${\mathbb R}^c$ denote the
  collections of Dedekind reals and Cauchy reals,
  respectively). As ${\mathbb R}^c$ is canonically
  embedded in ${\mathbb R}^d$ we can view ${\mathbb R}^c$ as a
  subset of ${\mathbb R}^d$ so that the latter result can be stated
  as ${\mathbb R}^d={\mathbb R}^c$.
The countable axiom of choice is accepted in Bishop-style
  constructive mathematics but cannot be assumed in all intuitionistic contexts.
   Some  choice is necessary for
  equating ${\mathbb R}^d$ and ${\mathbb R}^c$
   as there are sheaf models of
  higher order intuitionistic logic  in which ${\mathbb R}^d$ is not isomorphic
  to ${\mathbb
  R}^c$ (cf. \cite{FH}). This paper will show that the
  difference between ${\mathbb R}^d$ and ${\mathbb R}^c$ can be of
  a grander scale. When is the continuum a set? The standard,
  classical construction of $\mathbb R$ as a set uses Power Set.
  Constructively, the weaker principle of Subset Collection (in the context
  of the axioms of Constructive
  Zermelo-Fraenkel Set Theory CZF) suffices, as does even the
  apparently even weaker principle of Binary Refinement
  \cite{CIS}. In contrast, we shall demonstrate that there is a Kripke
  model of CZF with
  Exponentiation in lieu of Subset Collection in which the Cauchy
  reals form a set while the Dedekind reals constitute a proper
  class. This shows that Exponentiation and  Subset Collection Axiom have markedly
  different consequences for the theory of Dedekind reals.

  This paper proves the following theorems:
  \begin{theorem} (Fourman-Hyland \cite{FH})
IZF$_{Ref}$ does not prove that the Dedekind reals equal the
Cauchy reals.
\end{theorem}

\begin{theorem}
CZF$_{Exp}$ (i.e. CZF with Subset Collection replaced by
Exponentiation) does not prove that the Dedekind reals are a set.
\end{theorem}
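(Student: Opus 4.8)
The plan is to prove the theorem model-theoretically, by constructing a single Kripke model $\mathcal{M}$ of CZF$_{Exp}$ in which the Cauchy reals form a set but the Dedekind reals do not. Fix a transitive model $V$ of ZF (or a suitable large fragment) in the metatheory. The Kripke frame will be the partial order $\mathbb{P}$ of \emph{finite conditions} $p$ whose domain is a finite set of ordinals of $V$, where each value $p(\alpha)$ is a finite approximation to a Dedekind cut — a pair of finite sets of rationals $(A_\alpha,B_\alpha)$ with $\max A_\alpha<\min B_\alpha$ — ordered by componentwise extension; intuitively $p$ forces the $\alpha$-th generic cut to lie in the (still wide) gap $(\max A_\alpha,\min B_\alpha)$. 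Over $\mathbb{P}$ one builds the Kripke model in the usual way: to each node $p$ one assigns a domain of \emph{names} (defined by recursion on rank over $V$), with the natural transition maps for $p\le q$, and one defines the forcing relation $p\Vdash\varphi$ by the Kripke clauses. The $\alpha$-th generic object $d_\alpha$ is then the pair $(L_\alpha,U_\alpha)$ of downward/upward closures of the approximations appearing along the generic, and these are the witnesses that make ${\mathbb R}^d$ large.

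First I would establish the basic facts about $\mathcal M$: (i) intuitionistic predicate logic holds by the Kripke semantics; (ii) $\mathcal M$ validates Extensionality, Pairing, Union, Infinity, Bounded Separation, Strong Collection and Set Induction — these go through as for standard tree-Kripke models of set theory, with Set Induction using the well-foundedness of the name hierarchy; and (iii) $\mathcal M$ validates Exponentiation. For (iii) the key observation is that if $A$ and $B$ are (names for) sets at a node $p$, then membership of a pair in a function from $A$ to $B$ is decided using only the information in $A$, $B$ and finitely — hence ``set-much'' — many generic coordinates, so the collection of all $\mathcal M$-functions $A\to B$ can itself be named by a set; no such bound is available for $\mathcal P(\mathbb Q)$, since a subset of $\mathbb Q$ may depend on any of the proper-class-many generic coordinates. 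I would then verify that each $d_\alpha$ is forced to be a located Dedekind cut: roundedness and disjointness are immediate, and locatedness — for rationals $a<b$, either $a\in L_\alpha$ or $b\in U_\alpha$, and indeed $\forall\varepsilon>0\,\exists a\in L_\alpha,b\in U_\alpha\,(b-a<\varepsilon)$ — holds because the sets of conditions forcing such a decision, or narrowing the gap below $\varepsilon$, are dense. A density argument also shows $\alpha\ne\beta\Rightarrow\mathcal M\Vdash d_\alpha\ne d_\beta$. Since $\alpha\mapsto d_\alpha$ is then a definable injection of the ordinals into ${\mathbb R}^d$, $\mathcal M$ cannot contain a set of all Dedekind reals: ${\mathbb R}^d$ is a proper class in $\mathcal M$ (equivalently $\mathcal P(\mathbb Q)$ is not a set, so Subset Collection and Power Set both fail — consistent with the base theory being only CZF$_{Exp}$).

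It remains to see that ${\mathbb R}^c$ \emph{is} a set in $\mathcal M$, and this is where the main difficulty lies. One must show that the forcing $\mathbb P$, although it adds new functions $\mathbb N\to\mathbb Q$, adds \emph{no new Cauchy reals}: any $f\colon\mathbb N\to\mathbb Q$ with a forced Cauchy modulus that exists at some node is forced to be equivalent to a ground-model Cauchy sequence, so that ${\mathbb R}^c$ is (a name for) the ground model's set of reals. The point — the Kripke analogue of the connectedness obstruction in the Fourman--Hyland sheaf model underlying Theorem 1.1 — is that no condition can pin down a Cauchy modulus for a generic cut $d_\alpha$, because any finite condition still permits the gap $(\max A_\alpha,\min B_\alpha)$ to be displaced; carrying this through for \emph{arbitrary} names for modulated Cauchy sequences, by a genericity/density argument, is the technical heart of the construction and is the step I expect to be most delicate. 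The subsidiary obstacle is coordinating the three requirements at once: the frame must be ``branchy'' enough that the generic cuts are genuinely new and locatable (for ${\mathbb R}^d$ to be a proper class), yet tame enough that function spaces between sets remain sets (for Exponentiation) and no new Cauchy real sneaks in (for ${\mathbb R}^c$ to be a set); the finite-approximation poset $\mathbb P$ above is designed to thread this needle. Once these points are in place, $\mathcal M$ is the required model and the theorem follows.
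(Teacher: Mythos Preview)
Your proposal has a genuine gap at the locatedness step, and it is not a detail but precisely the obstruction the paper's construction is built to overcome. Under the standard Kripke clauses you invoke, $p\Vdash\phi\lor\psi$ requires $p\Vdash\phi$ or $p\Vdash\psi$; density below $p$ of conditions deciding the disjunction is irrelevant. Take $p$ with $\alpha$-gap $(0,1)$ and rationals $a=\tfrac13<b=\tfrac23$. Then $p\not\Vdash a\in L_\alpha$, since the extension narrowing the gap to $(0,\tfrac14)$ puts $a$ above the cut; and $p\not\Vdash b\in U_\alpha$, since the extension narrowing the gap to $(\tfrac34,1)$ puts $b$ below it. Hence $p\not\Vdash a\in L_\alpha\lor b\in U_\alpha$, your $d_\alpha$ is not a located cut in $\mathcal M$, and the proper-class argument collapses. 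If you instead pass silently to sheaf or Beth semantics so that density \emph{does} yield the disjunction, you are back in a topological model of full IZF, Power Set included, and $\mathbb R^d$ is again a set. Either way the theorem is lost. (The same tension undercuts the sketchy Exponentiation step: the rigidity that keeps functions between ground-model sets ground-model under plain Kripke semantics is the very rigidity that prevents the generics from being located.)

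This dilemma --- located generics need sheaf-type disjunction, but sheaf semantics validates Power Set --- is exactly what the paper's proof resolves, and the resolution is the substance of the argument. The paper keeps the topological semantics over $\mathbb R$ from Section~2, so the single generic $G$ \emph{is} a located cut, but equips the Kripke frame with a second family of transition maps, the ``settling-down'' functions $g(\sigma)=f(\sigma)^s$, under which every term collapses to a ground-model set; correspondingly the terms carry an extra component indexed by individual reals, and the forcing clauses for $\to$ and $\forall$ acquire additional conditions referring to the settled formulas $\phi^r$. These non-standard clauses are what break Subset Collection and Power Set --- any purported set of Dedekind reals, once settled via $g$, becomes a ground-model set and hence cannot contain $G$ --- while still validating Exponentiation, via a connectedness/overspill argument showing that any function between ground-model sets is already a ground-model function. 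Nothing like this settling mechanism appears in your finite-support product of class-many approximated cuts, and without it neither standard choice of semantics over that poset will produce a model of $\mathrm{CZF}_{Exp}$ in which $\mathbb R^d$ is a proper class.
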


Even though the proof of the first theorem given here could be
converted easily to the original Fourman-Hyland proof of the same,
it is still included because the conversion in the other
direction, from the original sheaf proof to the current Kripke
model, is not obvious (to us at least); one might well want to
know what the Kripke model proof of this theorem is. Furthermore,
it is helpful as background to understand the construction of the
second proof. While the second proof could similarly be turned
into a purely topological argument, albeit of a non-standard type,
unlike Gauss, we do not wish to cover our tracks. The original
intuition here was the Kripke model -- indeed, we know of no other
way to motivate the unusual topological semantics and term
structure -- and so it might be of practical utility to have that
motivation present and up front. These benefits of presenting the
Kripke constructions notwithstanding, this article is
reader-friendly enough so that anyone who wanted to could simply
skip the sections on constructing the models and go straight to
the definitions of topological semantics (mod exchanging later on
a few ``true at node $r$"s with ``forced by some neighborhood of
$r$"s).

The paper is organized as follows. After a brief review of
  Constructive Zermelo-Fraenkel Set Theory and  notions of real
  numbers, section 2 features a Kripke model of $IZF_{Ref}$
  in which ${\mathbb R}^d\ne {\mathbb R}^c$. Here  $IZF_{Ref}$
  denotes
  Intuitionistic Zermelo-Fraenkel Set Theory with the Reflection
  schema.\footnote{Reflection, Collection and Replacement are
  equivalent in classical set theory. Intuitionistically,
  Reflection implies Collection which in turn implies Replacement, however,
these implications cannot be reversed (see \cite{FS} for the
latter).} In section 3 the model of section 2 undergoes
refinements and
  pivotally techniques of \cite{RSL1} are put to use to engender a model of
  $CZF_{Exp}$ in which ${\mathbb R}^d$ is a proper class.

  \subsection{Constructive Zermelo-Fraenkel Set Theory}
In this subsection we will summarize the language and axioms for
$\CZF$.  The language of $\CZF$ is the same first order language
as that of classical Zermelo-Fraenkel Set Theory, $ZF$ whose only
non-logical symbol is $\in$. The logic of $\CZF$ is intuitionistic
first order logic with equality. Among its non-logical axioms are
{\it Extensionality}, {\it Pairing} and {\it Union} in their usual
forms.
  $\CZF$ has
additionally axiom schemata which we will now proceed to
summarize.
\paragraph{Infinity:}
$\exists x\forall u\bigl[u\ini x\leftrightarrow
\bigl(\emptyset=u\oder \exists v\ini x\;u=v+1 \bigr)\bigr]$ where
$v+1=v\cup\{v\}$.
\paragraph{Set Induction:}
$\forall x[\forall y \in x \phi (y) \rightarrow \phi (x)]
\rightarrow \forall x \phi (x)$
\paragraph{Bounded Separation:}
$\phantom{AAAA}\forall a \exists b \forall x
[x \in b \leftrightarrow x \in a \wedge \phi (x)]$ \\[1ex]
for all {\it bounded} formulae  $\phi$. A set-theoretic formula is
{\em bounded} or {\em restricted} if it is constructed from prime
formulae using $\neg,\wedge,\vee,\rightarrow,\forall x\ini y$ and
$\exists x\ini y$ only.
\paragraph{Strong Collection:}  For all formulae $\phi$,
$$\forall a
\bigl[\forall x \in a \exists y \phi (x,y)\;\rightarrow\; \exists
b \,[\forall x \in a\, \exists y \in b\, \phi (x,y) \wedge \forall
y \in b\, \exists x \in a \,\phi (x,y)]\bigr].$$
\paragraph{Subset Collection:}  For all formulae $\psi$,
\begin{eqnarray*}\lefteqn{\forall a \forall b \exists c
\forall u \,\bigl[\forall x \in a \,\exists y \in b\;\psi (x,y,u)
\,\rightarrow \,}\\ && \exists d \in c \,[\forall x \in a\,
\exists y \in d\, \psi (x,y,u) \wedge \forall y \in d\, \exists x
\in a \,\psi (x,y,u)]\bigr].\end{eqnarray*}
  The Subset Collection
schema easily qualifies as the most intricate axiom of $\CZF$. To
explain this axiom in different terms, we introduce the notion of
{\em fullness} (cf. \cite{ac1}).
\begin{definition}{\em As per usual, we use $\langle x,y\rangle$ to denote
the ordered pair of $x$ and $y$. We use $\Fun(g)$, $\dom(R)$,
$\ran(R)$ to convey that $g$ is a function and to denote the
domain and range of any relation $R$, respectively.

For sets $A,B$ let $A \times B$ be the cartesian product of $A$
and $B$, that is the set of ordered pairs $\langle x,y\rangle$
with $x\inn A$ and $y\inn B$. Let $\hoch{A}{B}$ be the class of
all functions with domain $A$ and with range contained in $B$. Let
$\voll{A}{B}$ be the class of all sets $R\subseteq A\times B$
satisfying $\forall u\ini A\,\exists v\ini B\,\paar{u,v}\in R$. A
set $C$ is said to be {\em full in} $\voll{A}{B}$ if $C\subseteq
\voll{A}{B}$ and $$\forall R\in\voll{A}{B}\,\exists S\ini C\,
S\subseteq R.$$

The expression $\voll{A}{B}$ should be read as the collection of
{\em multi-valued functions} from the set $A$ to the set $B$.

Additional axioms we shall consider are:
\paragraph{Exponentiation:} $\forall x\forall y\exists z\, z=\hoch{x}{y}$.
\paragraph{Fullness:}
$\forall x\forall y\exists z\,\;\mbox{\em $z$ is full in
$\voll{x}{y}$}$. }\end{definition} The next result provides an
equivalent rendering of Subset Collection.

\begin{proposition}\label{ober2.3}
 Let $\CZF^-$ be $\CZF$ without {\em Subset Collection}.
\begin{itemize}
\item[(i)] $\CZF^- + \mbox{\ttt Subset
Collection}\; = \; \CZF^- + \mbox{\ttt Fullness}.$ \item[(ii)]
$\CZF\vdash\,\mbox{\ttt
Exponentiation}$.\end{itemize}\end{proposition}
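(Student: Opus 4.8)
The plan is to argue entirely within $\CZF^-$, using Strong Collection (hence Replacement), Bounded Separation, Pairing and Union, together with the routine facts that $\CZF^-$ proves the existence of Cartesian products $a\times b$ and of the range $\ran(S)$ of any set $S$. For (i) I would first derive Subset Collection from Fullness. Fix sets $a,b$; by Fullness pick $C$ full in $\voll{a}{b}$ and put $c:=\{\ran(S):S\in C\}$, a set by Replacement. Now let $u$ be arbitrary with $\forall x\in a\,\exists y\in b\,\psi(x,y,u)$. Since $\psi$ is unrestricted one cannot form $\{\paar{x,y}\in a\times b:\psi(x,y,u)\}$ by Separation, so instead I would apply Strong Collection to the relation $\exists y\,(p=\paar{x,y}\wedge y\in b\wedge\psi(x,y,u))$ over $x\in a$; this yields a set $R$ with $R\subseteq a\times b$, $R\in\voll{a}{b}$, and $\psi(x,y,u)$ holding for every $\paar{x,y}\in R$. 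Fullness then gives $S\in C$ with $S\subseteq R$; since $S\in\voll{a}{b}$ it is total on $a$, so $d:=\ran(S)$ belongs to $c$ and witnesses $\forall x\in a\,\exists y\in d\,\psi(x,y,u)\wedge\forall y\in d\,\exists x\in a\,\psi(x,y,u)$, which is the conclusion of Subset Collection for this $u$ (and $c$ does not depend on $u$).

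For the converse direction of (i), fix $a,b$ and apply Subset Collection with the parameters $a$, $b':=a\times b$ and $\psi(x,p,u)\equiv\exists y\,(p=\paar{x,y}\wedge\paar{x,y}\in u)$, obtaining a set $c$. Because ``$S\in\voll{a}{b}$'' unfolds to a restricted formula in the parameters $a,b$, the class $C:=\{S\in c:S\in\voll{a}{b}\}$ is a set by Bounded Separation, and $C\subseteq\voll{a}{b}$ by construction. Given any $R\in\voll{a}{b}$, substituting $u:=R$ into this instance makes the totality hypothesis true, so there is $d\in c$ with $\forall x\in a\,\exists p\in d\,\psi(x,p,R)$ and $\forall p\in d\,\exists x\in a\,\psi(x,p,R)$; unwinding $\psi$ shows $d\subseteq R\subseteq a\times b$ and $d$ total on $a$, hence $d\in C$ and $d\subseteq R$. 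Thus $C$ is full in $\voll{a}{b}$.

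For (ii), by (i) it is enough to derive Exponentiation from Fullness. Given $a,b$, pick $C$ full in $\voll{a}{b}$ and set $\hoch{a}{b}:=\{f\in C:\Fun(f)\}$, a set by Bounded Separation since ``$f$ is a function'' is restricted; conversely each member of this set is a function from $a$ to $b$ because $C\subseteq\voll{a}{b}$. The crux is that every function $f\colon a\to b$ already lies in $C$: we have $f\in\voll{a}{b}$, so some $S\in C$ satisfies $S\subseteq f$, but $S$ is total on $a$ while $f$ is single-valued, which forces $S=f$. I expect the main obstacle to be the Fullness-to-Subset-Collection step: the unrestricted formula $\psi$ with its parameter $u$ prevents a direct appeal to Separation, so one must manufacture the set-sized relation $R$ via Strong Collection and then check that $R$, and the set $C$, have exactly the closure properties needed.
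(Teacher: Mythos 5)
Your proof is correct, and it is essentially the argument the paper relies on: the paper does not prove this proposition itself but cites Aczel's Proposition 2.2, and your derivation (Strong Collection to manufacture the total relation $R\subseteq a\times b$ realizing $\psi$, Fullness to shrink it to some $S\in C$ and take ranges; Subset Collection instantiated with $\psi(x,p,u)\equiv\exists y\,(p=\paar{x,y}\wedge \paar{x,y}\in u)$ plus Bounded Separation for the converse; and the observation that a total single-valued refinement of a function must equal it, for Exponentiation) is exactly that standard proof. No gaps.
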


\proof \cite{ac1}, Proposition 2.2. (The equality in $(i)$ is as
theories: that is, they both prove the same theorems.)\qed
  \subsection{The Cauchy and Dedekind reals}

  \begin{definition} A {\em fundamental sequence} is a sequence
  $(r_n)_{n\in{\mathbb N}}$ of rationals, together with is a {\em
  (Cauchy-)modulus} $f:\N\to\N$ such that
    $$\forall k\,\forall
    m, n\geq f(k)\;\; |r_m-r_n|<\frac1{2^k},$$
    where all quantifiers range over $\N$.

    Two fundamental sequences $(r_n)_{n\in{\mathbb N}},(s_n)_{n\in{\mathbb N}}$
    are said to coincide (in symbols $\approx$)
    if
    $$\forall k\exists n\forall m\geq
    n\;\;|r_m-s_m|<\frac1{2^k}.$$
    $\approx$ is indeed an equivalence relation on
    fundamental sequences. The set of Cauchy reals $\R^c$ consists
    of the equivalence classes of fundamental sequences relative
    to
    $\approx$. For the equivalence class of $(r_n)_{n\in{\mathbb
    N}}$ we use the notation $(r_n)_{n\in{\mathbb N}}/\approx$.
    \end{definition}

The development of  the theory of Cauchy reals in
    \cite{troelstra}, Ch.5, Sect.2-4 can be carried out on the
    basis of $CZF_{Exp}$. Note that the axiom AC-NN! \footnote
    {$(\forall m \in \mathbb{N} \; \exists ! n \in \mathbb{N}
    \; \phi(m,n))
    \rightarrow (\exists f:\mathbb{N} \rightarrow \mathbb{N}
    \; \forall m \in \mathbb{N} \; \phi(m, f(m)))$} is deducible
    in $CZF_{Exp}$.

    \begin{definition} Let $S\subseteq{\mathbb Q}$. $S$ is called
    a {\em left cut} or {\em Dedekind real} if the following
    conditions are satisfied:
    \begin{enumerate}
    \item $\exists r(r\in S)\;\wedge\;\exists r'(r'\notin S)$
    {\em (boundedness)}
    \item $\forall r\in S\,\exists r'\in S\,(r<r')$ {\em
    (openness)}
    \item $\forall rs\in{\mathbb Q}\,[r<s\rightarrow r\in S\vee
    s\notin S]$ {\em (locatedness)}
\end{enumerate}
\end{definition} For $X\subseteq {\mathbb Q}$ define
$X^<:=\{s\in{\mathbb Q}: \exists r\in X \;s<r\}$.  If $S$ is a
left cut it follows from openness and locatedness that $S=S^<$.

\begin{lemma} Let $r=(r_n)_{n\in{\mathbb N}}$ and  $r'=(r'_n)_{n\in{\mathbb N}}$
be fundamental sequences of rationals. Define $$X_r:=\{s\in
{\mathbb Q}: \exists m \;s<(r_{f(m)}-\frac1{2^m})\}.$$ We then
have
\begin{enumerate} \item $X_r$ is a Dedekind real.
\item $X_r=X_{r'}$ if and only if $(r_n)_{n\in{\mathbb
N}}\approx(r'_n)_{n\in{\mathbb N}}$. \item $\R^c$ is a subfield of
$\R^d$ via the mapping $(r_n)_{n\in{\mathbb
N}}/\approx\;\;\mapsto\; X_r$.
\end{enumerate}
\end{lemma}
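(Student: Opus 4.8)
The plan is to verify the three claims in order, working entirely within $CZF_{Exp}$ (which, by Proposition~\ref{ober2.3}(ii), suffices to develop the Cauchy reals and in which AC-NN! is available). Throughout, fix the Cauchy moduli $f$ for $r$ and $g$ for $r'$, so that $|r_m-r_n|<\frac1{2^k}$ whenever $m,n\geq f(k)$, and similarly for $r'$.

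For (1), I would check the three defining conditions of a Dedekind real for $X_r$. \emph{Boundedness}: any rational below $r_{f(0)}-2$ lies in $X_r$ (taking $m=0$), and $r_{f(0)}+2$ is not in $X_r$, since for every $m$ one has $r_{f(m)}-\frac1{2^m}<r_{f(m)}\le r_{f(0)}+|r_{f(m)}-r_{f(0)}|<r_{f(0)}+1$ when $m\geq$ (a suitable bound), with the finitely many small $m$ handled directly — more cleanly, observe $r_{f(m)}<r_{f(0)}+1$ for all $m$ once we note $f$ may be taken nondecreasing, or just bound $\sup_m r_{f(m)}$ using the Cauchy condition. \emph{Openness}: if $s<r_{f(m)}-\frac1{2^m}$, then $s<r_{f(m+1)}-\frac1{2^{m+1}}$ fails in general, so instead pick a rational $t$ strictly between $s$ and $r_{f(m)}-\frac1{2^m}$; then $t\in X_r$ and $s<t$, using the density of $\mathbb Q$ and decidability of order on $\mathbb Q$. \emph{Locatedness}: given rationals $p<q$, choose $k$ with $\frac1{2^{k-1}}<q-p$, i.e.\ $\frac2{2^k}<q-p$; compare the rational $r_{f(k)}-\frac1{2^k}$ with $p$ and with $q$ (decidable on $\mathbb Q$). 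If $p<r_{f(k)}-\frac1{2^k}$ then $p\in X_r$. Otherwise $r_{f(k)}-\frac1{2^k}\le p$, and I claim $q\notin X_r$: for any $m$, $r_{f(m)}-\frac1{2^m}\le r_{f(k)}+\frac1{2^k}-\frac1{2^m}<r_{f(k)}+\frac1{2^k}$ (for $m\geq k$, using $|r_{f(m)}-r_{f(k)}|<\frac1{2^k}$), which is $\le p+\frac2{2^k}<p+(q-p)=q$; the finitely many $m<k$ need a separate crude bound, or one arranges the estimate to cover all $m$ by taking the modulus monotone. Hence $q\notin X_r$.

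For (2), I would show $X_r=X_{r'}\iff r\approx r'$. ($\Leftarrow$) Assume $r\approx r'$ and $s\in X_r$, so $s<r_{f(m)}-\frac1{2^m}$ for some $m$; pick $k$ large and then $n$ from the $\approx$-condition so that $|r_j-r'_j|<\frac1{2^k}$ for $j\geq n$, and combine with the Cauchy estimates for both sequences to produce $m'$ with $s<r'_{g(m')}-\frac1{2^{m'}}$; by symmetry $X_r=X_{r'}$. ($\Rightarrow$) Assume $X_r=X_{r'}$; given $k$, I want $n$ with $|r_m-r'_m|<\frac1{2^k}$ for all $m\geq n$. The idea is that $X_r$ determines the limit up to arbitrary precision: for rationals $p$, from $p\in X_r$ one gets $p$ is "below the limit", and from $p\notin X_r$ (together with locatedness applied to a slightly larger rational) one gets $p$ is "not far below the limit". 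Spelling this out: pick rationals $p<q$ with $q-p$ small and $p\in X_r$, $q\notin X_r$ (possible by locatedness run at fine resolution); then both $r_m$ and $r'_m$ lie within a controlled distance of $[p,q]$ for large $m$, forcing $|r_m-r'_m|$ small. This direction is the most delicate, because one must extract the quantitative Cauchy-coincidence from the purely set-theoretic equality of cuts without invoking more choice than AC-NN!.

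For (3), I would first note the map $(r_n)_n/\!\approx\;\mapsto X_r$ is well-defined and injective by (2), and lands in $\mathbb R^d$ by (1). Then I would check it is a field homomorphism: the rational constants go to the standard rational cuts, and for $+$ one shows $X_{r+r'}=X_r+X_{r'}$ where the right-hand side is the cut-sum $\{s+t: s\in X_r,\ t\in X_{r'}\}^<$; similarly for additive inverse, multiplication (splitting into sign cases, which are available since the order relation of two distinct reals is decidable enough for this purpose, or by the standard $\max/\min$ formulas for cut-multiplication), and the multiplicative inverse of an apart-from-$0$ real. It also preserves and reflects the order, since $X_r<X_{r'}$ in $\mathbb R^d$ iff some rational separates them iff $(r_n)_n<(r'_n)_n$ in $\mathbb R^c$. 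Each of these is a routine but slightly lengthy cut computation; I would present the additive case in full and indicate that multiplication is analogous. The main obstacle in the whole lemma is the ($\Rightarrow$) direction of (2): everything else is a direct unwinding of definitions, but recovering a Cauchy modulus of coincidence from the bare equality $X_r=X_{r'}$ requires carefully choosing separating rationals at each precision and is where the constructive bookkeeping (and the single permitted use of countable unique choice) actually bites.
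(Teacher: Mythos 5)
The paper does not actually prove this lemma: its ``proof'' is the single line ``Exercise or see \cite{ac4}, Section 8.4,'' so there is no argument in the paper to compare yours against. Your outline is the standard one and is essentially correct; all three parts go through as you describe. Two places can be tightened. First, the worries about the modulus $f$ not being monotone (in boundedness and in locatedness) dissolve if you route every comparison through $r_{\max(f(m),f(k))}$: this gives $|r_{f(m)}-r_{f(k)}|<\frac1{2^m}+\frac1{2^k}$ for \emph{all} $m,k$, so $r_{f(m)}-\frac1{2^m}<r_{f(k)}+\frac1{2^k}$ uniformly and no case split on small $m$ is needed. Second, the direction $X_r=X_{r'}\Rightarrow r\approx r'$, which you flag as the delicate step possibly needing AC-NN!, in fact needs no choice at all: for each $j$ the rationals $p_j=r_{f(j)}-\frac2{2^j}$ and $q_j=r_{f(j)}+\frac2{2^j}$ satisfy $p_j\in X_r$ and $q_j\notin X_r$ (by the uniform estimate above), and feeding $p_j\in X_{r'}$ and $q_j\notin X_{r'}$ through the same estimate for $r'$ yields $|r_{f(j)}-r'_{g(j)}|\le\frac3{2^j}$, whence $|r_m-r'_m|<\frac6{2^j}$ for all $m\ge\max(f(j),g(j))$; taking $j=k+3$ gives an explicit witness $n$ for the coincidence condition. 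With those adjustments your sketch is a complete proof of (1) and (2), and (3) is, as you say, routine cut arithmetic once well-definedness and injectivity are in hand.
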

{\bf Proof:} Exercise or see \cite{ac4}, Section 8.4. \qed

\section{${\mathbb R}^d\ne {\mathbb R}^c$}

\begin{theorem} (Fourman-Hyland \cite{FH})
IZF$_{Ref}$ does not prove that the Dedekind reals equal the
Cauchy reals.
\end{theorem}

\subsection{Construction of the Model}
Let $M_{0} \prec M_{1} \prec$ ... be an $\omega$-sequence of
models of ZF set theory and of elementary embeddings among them,
as indicated, such that the sequence from $M_{n}$ on is definable
in $M_{n}$, and such that each thinks that the next has
non-standard integers. Notice that this is easy to define (mod
getting a model of ZF in the first place): an iterated ultrapower
using any non-principal ultrafilter on $\omega$ will do. (If
you're concerned that this needs AC too, work in L of your
starting model.) We will ambiguously use the symbol $f$ to stand
for any of the elementary embeddings inherent in the
$M_{n}$-sequence.

\begin{definition}
The frame (underlying partial order) of the Kripke model $M$ will
be a (non-rooted) tree with $\omega$-many levels. The nodes on
level $n$ will be the reals from $M_n$. $r'$ is an immediate
successor of $r$ iff $r$ is a real from some $M_n$, $r'$ is a real
from $M_{n+1}$, and $r$ and $r'$ are infinitesimally close; that
is, $f(r) - r'$, calculated in $M_{n+1}$ of course, is
infinitesimal, calculated in $M_n$ of course. In other words, in
$M_n$, $r$ is that standard part of $r'$.
\end{definition}

The Kripke structure will be defined like a forcing extension in
classical set theory. That is, there will be a ground model, terms
that live in the ground model, and an interpretation of those
terms, which, after modding out by =, is the final model $M$.
Since the current construction is mostly just a re-phrasing of the
topological, i.e. Heyting-valued, model of \cite{FH}, the
similarity with forcing, i.e. Boolean-valued models, is not merely
an analogy, but essentially the same material, and so it makes
some sense to present it the way people are used to it.

\begin{definition}
The ground Kripke model has, at each node of level $n$, a copy of
$M_{n}$. The transition functions (from a node to a following
node) are the elementary embeddings given with the original
sequence of models (and therefore will be notated by $f$ again).
\end{definition}

Note that by the elementarity of the extensions, this Kripke model
is a model of classical ZF. More importantly, the model restricted
to any node of level $n$ is definable in $M_{n}$, because the
original $M$-sequence was so definable.

\begin{definition}
The terms are defined at each node separately. For a node at level
$n$, the terms are defined in $M_n$, inductively on the ordinals
in $M_n$. At any stage $\alpha$, a term of stage $\alpha$ is a set
$\sigma$ of the form $\{ \langle \sigma_{i}, J_{i} \rangle \mid i
\in I \}$, where I is some index set, each $\sigma_{i}$ is a term
of stage $< \alpha$, and each J$_{i}$ is an open subset of the
real line.
\end{definition}
(Often the terms at stage $\alpha$ are defined to be functions
from the terms of all stages less than $\alpha$, as opposed to the
relations above, which may be non-total and multi-valued. This
distinction makes absolutely no difference. Such a relation can be
made total by sending all terms not yet in the domain to the empty
set, and functional by taking unions of second components.)

Intuitively, each open set $J$ is saying ``the generic real is in
me." Also, each node $r$ is saying ``I am the generic, or at least
somebody in my infinitesimal universe is." So at node $r$, $J$
should count at true iff $r \in J$. These intuitions will appear
later as theorems. (Well, lemmas.)

The ground model can be embedded in this term structure: for $x
\in M_n$, its canonical name $\hat{x}$ is defined inductively as
$\{ \langle \hat{y}, \mathbb{R} \rangle \mid y \in x \}$. Terms of
the form $\hat{x}$ are called ground model terms.

Notice that the definition of the terms given above is uniform
among the $M_n$'s, and so any term at a node gets sent by the
transition function $f$ to a corresponding term at any given later
node. Hence we can use the same functions $f$ yet again as the
transition functions for this term model. (Their coherence on the
terms follows directly from their coherence on the original
$M_n$'s.)

At this point in the construction of the Kripke model, we have the
frame, a universe (set of objects) at each node, and the
transition functions. Now we need to define the primitive
relations at each node. In the language of set theory, these are
=$_{M}$ and $\in_{M}$ (the subscript being used to prevent
confusion with equality and membership of the ambient models
$M_{n}$). This will be done via a forcing relation $\Vdash$.

\begin{definition}
$J \Vdash \sigma =_{M} \tau$ and $J \Vdash \sigma \in_{M} \tau$
are defined inductively on $\sigma$ and $\tau$, simultaneously for
all open sets of reals $J$:

$J \Vdash \sigma =_{M} \tau$ iff for all $\langle \sigma_{i},
J_{i} \rangle \in \sigma \; J \cap J_{i} \Vdash \sigma_{i} \in_{M}
\tau$ and vice versa

$J \Vdash \sigma \in_{M} \tau$ iff for all $r \in J$ there is a
$\langle \tau_{i}, J_{i} \rangle \in \tau$ and $J'$ such that $r
\in J' \cap J_{i} \Vdash \sigma =_{M} \tau_{i}$
\end{definition}

(We will later extend this forcing relation to all formulas.)

Note that these definitions, for $J, \sigma, \tau \in M_n$, can be
evaluated in $M_n$, without reference to $M_{n+1}$ or to future
nodes or anything. Therefore $J \Vdash \phi$ (according to $M_n$)
iff $f(J) \Vdash f(\phi)$ (according to $M_{n+1}$), by the
elementarity of $f$. So we can afford to be vague about where
various assertions are evaluated, since by this elementarity it
doesn't matter. (The same will be true when we extend forcing to
all formulas.)

\begin{definition}
At node $r$, for any two terms $\sigma$ and $\tau$, $\sigma =_{M}
\tau$ iff, for some $J$ with $r \in J$, $J \Vdash \sigma =_{M}
\tau$.

Also, at $r$, $\sigma \in_{M} \tau$ iff for some $J$ with $r \in
J$, $J \Vdash \sigma \in_{M} \tau$.
\end{definition}
\underline{\bf Notation} Satisfaction (in the sense of Kripke
semantics) at node $r$ will be notated with $\models$, as in $``r
\models \sigma = \tau"$. This should not be confused with the
forcing relation $\Vdash$, even though the latter symbol is often
used in the literature for Kripke satisfaction.

Thus we have a first-order structure at each node.

To have a Kripke model, the transition functions $f$ must also
respect this first-order structure, =$_{M}$ and $\in_{M}$; to wit:

\begin{lemma}
f is an $=_{M}$ and $\in_{M}$-homomorphism. That is, if $\sigma
=_{M} \tau$ then $f(\sigma) =_{M} f(\tau$), and similarly for
$\in_{M}$.
\end{lemma}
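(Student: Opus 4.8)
The plan is to prove the stronger statement at the level of the forcing relation, namely that $f$ commutes with $\Vdash$, and then push this down to the Kripke satisfaction $=_M$, $\in_M$ via the definitions at nodes. More precisely, I would first observe the ``elementarity'' fact already flagged in the text: for $J,\sigma,\tau \in M_n$, the clauses defining $J \Vdash \sigma =_M \tau$ and $J \Vdash \sigma \in_M \tau$ are arithmetical-in-$M_n$ statements (an induction on the stages of $\sigma,\tau$, quantifying only over elements of the transitive closures of $\sigma,\tau$ and over open sets of reals, all of which live in $M_n$). Hence, since $f\colon M_n \prec M_{n+1}$ is elementary, $J \Vdash \sigma =_M \tau$ holds in $M_n$ iff $f(J) \Vdash f(\sigma) =_M f(\tau)$ holds in $M_{n+1}$, and likewise for $\in_M$. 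This is the core computation; everything else is bookkeeping.

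Next I would unwind the definition of the first-order structure at a node. Suppose $r$ is a node of level $n$ and $r' $ an immediate successor at level $n+1$ (the general case of an arbitrary later node follows by composing transition functions, or by induction on the level difference). Assume $r \models \sigma =_M \tau$, i.e.\ there is an open $J \in M_n$ with $r \in J$ and $J \Vdash \sigma =_M \tau$ in $M_n$. By the elementarity fact, $f(J) \Vdash f(\sigma) =_M f(\tau)$ in $M_{n+1}$. It remains to check that $r' \in f(J)$, for then by definition $r' \models f(\sigma) =_M f(\tau)$, which is exactly $f(\sigma) =_M f(\tau)$ at the node $r'$. The same argument verbatim handles $\in_M$ in place of $=_M$.

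So the one genuine point to verify — and the step I expect to be the main obstacle, modest though it is — is the geometric claim that $r \in J$ implies $r' \in f(J)$, where $J$ is an open subset of the reals of $M_n$, $f(J)$ is its image (the ``same'' open set, reinterpreted) in $M_{n+1}$, and $r'$ is infinitesimally close to $f(r)$ in the sense of Definition. Here one uses that $J$ is \emph{open}: $J$ is (coded by) a union of rational intervals, $f(J)$ is the union of the $f$-images of those same rational intervals (rational endpoints are fixed by $f$), and since $r \in J$ there is a rational interval $(p,q) \subseteq J$ with $p < r < q$; then $p < f(r) < q$ in $M_{n+1}$, and as $r'$ differs from $f(r)$ only by an infinitesimal while $p,q$ are standard rationals bounded away from $f(r)$, we still have $p < r' < q$, so $r' \in (p,q) \subseteq f(J)$. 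This openness is precisely why the primitive relations transfer along the ``take standard part'' transition maps, and it is the crux of the lemma. Finally, to complete the claim that we have a genuine Kripke model one should note that these homomorphisms are compatible with composition — immediate from the corresponding coherence of the $f$'s on the $M_n$'s — but that is already part of the setup rather than something needing separate proof here.
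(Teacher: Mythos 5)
Your proposal is correct and follows essentially the same route as the paper: take a forcing neighborhood $J \ni r$, use elementarity of $f$ to get $f(J) \Vdash f(\sigma) =_M f(\tau)$, and observe that $r' \in f(J)$ because $r'$ is infinitesimally close to $f(r)$. The paper states the last step without elaboration; your rational-interval justification of it is a harmless (and welcome) expansion of the same argument.
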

\begin{proof}
If $\sigma =_{M} \tau$ then let $J$ be such that $r \in J$ and $J
\Vdash \sigma =_{M} \tau$. Then $f(J)$ is open, $r' \in f(J)$
because $r'$ is infinitesimally close to $r$, and $f(J) \Vdash
f(\sigma) =_{M} f(\tau$) by elementarity. Hence $f(\sigma) =_{M}
f(\tau)$. Similarly for $\in_{M}$.
\end{proof}

We can now conclude that we have a Kripke model.

\begin{lemma} \label{equalitylemma}
This Kripke model satisfies the equality axioms:
\begin{enumerate}
\item $\forall x \; x=x$

\item $\forall x, y \; x=y \rightarrow y=x$

\item $\forall x, y, z \; x=y \wedge y=z \rightarrow x=z$

\item $\forall x, y, z \; x=y \wedge x \in z \rightarrow y \in z$

\item $\forall x, y, z \; x=y \wedge z \in x \rightarrow z \in y.$

\end{enumerate}

\end{lemma}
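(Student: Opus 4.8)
The plan is to verify each of the five equality axioms by unwinding the definitions of the forcing relation $\Vdash$ and of satisfaction at a node. The key observation is that satisfaction at node $r$ for an atomic formula $\phi$ amounts to the existence of an open $J \ni r$ with $J \Vdash \phi$, so it suffices to establish each axiom at the level of the forcing relation: if I can show $J \Vdash \sigma =_M \sigma$ for all $J$, and that $J \Vdash \sigma =_M \tau$ implies $J \Vdash \tau =_M \sigma$, and the analogous transitivity and congruence statements, then the node-wise versions follow immediately by intersecting the finitely many witnessing open sets (using that $r$ lies in each, hence in their intersection, which is again open).

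The key steps, in order, are as follows. First, reflexivity: prove by induction on the term $\sigma$ that $J \Vdash \sigma =_M \sigma$ for every open $J$; this unwinds to showing, for each $\langle \sigma_i, J_i\rangle \in \sigma$, that $J \cap J_i \Vdash \sigma_i \in_M \sigma$, which in turn follows because at every $r \in J \cap J_i$ the pair $\langle \sigma_i, J_i\rangle$ itself witnesses membership once we invoke the inductive hypothesis $J \cap J_i \Vdash \sigma_i =_M \sigma_i$ (strictly, one wants a small lemma that forcing is monotone under shrinking $J$, which is immediate from the definitions). Second, symmetry is essentially built into the phrase ``and vice versa'' in the definition of $J \Vdash \sigma =_M \tau$, so it is nearly syntactic. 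Third, transitivity of $=_M$ and the two congruence axioms (4) and (5): these I would prove together by a simultaneous induction on the complexity of the terms involved, each reducing to a diagram-chase through the definitions of $\Vdash$ for $=_M$ and $\in_M$ and the monotonicity/intersection properties of the forcing relation. Finally, I would lift each forced statement to the node level via the translation in the definition of $\models$, and note in passing that the whole argument is evaluated inside a fixed $M_n$ and is preserved by the transition maps $f$ by elementarity, consistent with the preceding lemma that $f$ is an $=_M$- and $\in_M$-homomorphism.

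The main obstacle I anticipate is setting up the induction for transitivity and the congruence axioms correctly: the definitions of $J \Vdash \sigma =_M \tau$ and $J \Vdash \sigma \in_M \tau$ are mutually recursive and the recursion is on the pair of terms, so one must be careful to state an induction hypothesis strong enough to cover all the atomic combinations ($=_M$ with $=_M$, $=_M$ with $\in_M$ in either slot) and well-founded in the term-construction order. Concretely, for axiom (4) one is given $J \Vdash \sigma =_M \tau$ and a witnessing open cover showing $\sigma \in_M \rho$ locally, and must produce the same for $\tau \in_M \rho$; the witness from $\rho$ is some $\langle \rho_i, J_i\rangle$ with a local forcing $\sigma =_M \rho_i$, and one composes this with the hypothesis via transitivity of $=_M$ applied to terms of lower rank — hence the need to prove transitivity simultaneously. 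Once the bookkeeping of the simultaneous induction is pinned down, each individual verification is a routine unwinding, so I would present the reflexivity and symmetry cases in modest detail and then treat transitivity and congruence as one combined inductive argument, spelling out only the representative case and leaving the symmetric variants to the reader.
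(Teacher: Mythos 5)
Your proposal matches the paper's proof in all essentials: reflexivity by induction on terms (with the auxiliary claim $J \cap J_i \Vdash \sigma_i \in_M \sigma$), symmetry read off the symmetric form of the definition, a monotonicity-under-shrinking lemma, transitivity of $=_M$ at the forcing level by induction on the mutually recursive term structure, the congruence axioms derived from these, and the node-level statements obtained by intersecting the finitely many witnessing neighborhoods. The only cosmetic difference is that the paper establishes forcing-level transitivity as a standalone lemma and then derives axioms (3)--(5) from it, rather than folding transitivity and congruence into a single simultaneous induction as you suggest; both organizations are sound.
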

\begin{proof}
1: It is easy to show with a simultaneous induction that, for all
$J$ and $\sigma, J \Vdash \sigma =_{M} \sigma$, and, for all
$\langle \sigma_{i}, J_{i} \rangle \in \sigma, J \cap J_{i} \Vdash
\sigma_{i} \in_{M} \sigma$.

2: Trivial because the definition of $J \Vdash \sigma =_{M} \tau$
is itself symmetric.

3: For this and the subsequent parts, we need some lemmas.
\begin{lemma} If $J' \subseteq J \Vdash \sigma =_{M}
\tau$ then $J' \Vdash \sigma =_{M} \tau$, and similarly for
$\in_{M}$.
\end{lemma}
\begin{proof} By induction on $\sigma$ and $\tau$.
\end{proof}

\begin{lemma} If $J \Vdash \rho =_{M} \sigma$ and $J \Vdash
\sigma =_{M} \tau$ then $J \Vdash \rho =_{M} \tau$.
\end{lemma}
\begin{proof} Again, by induction on terms. Let
$\langle \rho_{i}, J_{i} \rangle \in \rho$. Then $J \cap J_{i}
\Vdash \rho_{i} \in_{M} \sigma$, i.e. for all $r \in J \cap J_{i}$
there are $\langle \sigma_{j}, J_{j} \rangle \in \sigma$ and $J'
\subseteq J \cap J_{i}$ such that $r \in J' \cap J_{j} \Vdash
\rho_{i} =_{M} \sigma_{j}$. Fix any $r \in J \cap J_{i}$, and let
$\langle \sigma_{j}, J_{j} \rangle \in \sigma$ and $J'$ be as
given. By hypothesis, $J \cap J_{j} \Vdash \sigma_{j} \in_{M}
\tau$. So let $\langle \tau_{k}, J_{k} \rangle \in \tau$ and
$\hat{J} \subseteq J \cap J_{j}$ be such that $r \in \hat{J} \cap
J_{k} \Vdash \sigma_{j} =_{M} \tau_{k}$. Let $\tilde{J}$ be $J'
\cap \hat{J} \cap J_{j}$. Note that $\tilde{J} \subseteq J \cap
J_{i}$, and that $r \in \tilde{J} \cap J_{k}$. It remains only to
show that $\tilde{J} \cap J_{k} \Vdash \rho_{i} =_{M} \tau_{k}$.
Observing that $\tilde{J} \cap J_{k} \subseteq J' \cap J_{j},
\hat{J} \cap J_{k}$, it follows by the previous lemma that
$\tilde{J} \cap J_{k} \Vdash \rho_{i} =_{M} \sigma_{j}, \sigma_{j}
=_{M} \tau_{k}$, from which the desired conclusion follows by the
induction.
\end{proof}

Returning to proving property 3, the hypothesis is that for some
$J$ and $K$ containing $r$, $J \Vdash \rho =_{M} \sigma$ and $K
\Vdash \sigma =_{M} \tau$. By the first lemma, $J \cap K \Vdash
\rho =_{M} \sigma, \sigma =_{M} \tau$, and so by the second, $J
\cap K \Vdash \rho =_{M} \tau$, which suffices.

4: Let $J \Vdash \rho =_{M} \sigma$ and $K \Vdash \rho \in_{M}
\tau$. We will show that $J \cap K \Vdash \sigma \in_{M} \tau$.
Let $r \in J \cap K$. By hypothesis, let $\langle \tau_{i}, J_{i}
\rangle \in \tau, J' \subseteq K$ be such that $r \in J' \cap
J_{i} \Vdash \rho =_{M} \tau_{i}$; without loss of generality $J'
\subseteq J$. By the first lemma, $J' \cap J_{i} \Vdash \rho =_{M}
\sigma$, and by the second, $J' \cap J_{i} \Vdash \sigma =_{M}
\tau_{i}$.

5: Similar, and left to the reader.
\end{proof}

With this lemma in hand, we can now mod out by =$_{M}$, so that the
symbol ``=" is interpreted as actual set-theoretic equality. We
will henceforth drop the subscript $_{M}$ from = and $\in$,
although we will not distinguish notationally between a term
$\sigma$ and the model element it represents, $\sigma$'s
equivalence class.

Note that, at any node $r$ of level $n$, the whole structure $M$
restricted to $r$ and its successors is definable in $M_n$,
satisfaction relation $\models$ and all. This will be useful when
showing below that IZF holds. For instance, to show Separation,
satisfaction $r \models \phi(x)$ will have to be evaluated in
order to define the right separation term in $M_n$, and so
satisfaction must be definable in $M_n$.

\subsection{The Forcing Relation}
The primitive relations = and $\in$ were defined in terms of open
sets $J$. To put it somewhat informally, at $r$, $\sigma =$ or
$\in \tau$ if this is forced by a true set, and a set $J$ is true
at $r$ if $r \in J$. In fact, this phenomenon propagates to
non-primitive formulas. To show this, we extend the forcing
relation $J \Vdash \phi$ from primitive to all (first-order,
finitary) formulas. Then we prove as a lemma, the Truth Lemma,
what was taken as a definition for the primitive formulas, that $r
\models \phi$ iff $J \Vdash \phi$ for some $J$ containing $r$.

\begin{definition}
$J \Vdash \phi$ is defined inductively on $\phi$:

$J \Vdash \sigma = \tau$ iff for all $\langle \sigma_{i}, J_{i}
\rangle \in \sigma \; J \cap J_{i} \Vdash \sigma_{i} \in \tau$ and
vice versa

$J \Vdash \sigma \in \tau$ iff for all $r \in J$ there is a
$\langle \tau_{i}, J_{i} \rangle \in \tau$ and $J'$ such that $r
\in J' \cap J_{i} \Vdash \sigma = \tau_{i}$

$J \Vdash \phi \wedge \psi$ iff $J \Vdash \phi$ and $J \Vdash
\psi$

$J \Vdash \phi \vee \psi$ iff for all $r \in J$ there is a $J'$
containing $r$ such that $J \cap J' \Vdash \phi$ or $J \cap J'
\Vdash \psi$

$J \Vdash \phi \rightarrow \psi$ iff for all $J' \subseteq J$ if
$J' \Vdash \phi$ then $J' \Vdash \psi$

$J \Vdash \exists x \; \phi(x)$ iff for all $r \in J$ there is a
$J'$ containing $r$ and a $\sigma$ such that $J \cap J' \Vdash
\phi(\sigma)$

$J \Vdash \forall x \; \phi(x)$ iff for all $r \in J$ and $\sigma$
there is a $J'$ containing $r$ such that $J \cap J' \Vdash
\phi(\sigma)$

\end{definition}

\begin{lemma} \label{helpful lemma}
\begin{enumerate}
\item For all $\phi \; \; \emptyset \Vdash \phi$.
\item If $J' \subseteq J \Vdash \phi$ then $J' \Vdash \phi$.
\item If $J_{i} \Vdash \phi$ for all $i$ then $\bigcup_{i} J_{i}
\Vdash \phi$.
\item $J \Vdash \phi$ iff for all $r \in J$ there is a $J'$
containing $r$ such that $J \cap J' \Vdash \phi$.
\end{enumerate}
\end{lemma}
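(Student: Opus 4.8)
The plan is to prove all four parts by induction on $\phi$, with the four statements handled in a coordinated way since parts (1)--(3) are needed at lower complexity to establish part (4), and vice versa at the inductive steps. I would first observe that part (4) follows formally from parts (1)--(3) together with the obvious fact that $J = \bigcup_{r \in J}(J \cap J_r)$ whenever each $J_r$ contains $r$: the forward direction is immediate (take $J' = J$), and the reverse direction writes $J$ as a union of the sets $J \cap J_r$, each forcing $\phi$ by hypothesis, and applies part (3). So the real work is parts (1)--(3), and in fact part (4) is mostly a packaging convenience that will be used silently in the other inductions.

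For parts (1)--(3) I would go through the connectives. Part (1), that $\emptyset \Vdash \phi$, is essentially vacuous at every clause: all the defining clauses quantify either over pairs $\langle \sigma_i, J_i\rangle$ (and then intersect with $\emptyset$, landing back at $\emptyset$, where the induction hypothesis applies) or over points $r \in J = \emptyset$ (vacuous) or over subsets $J' \subseteq \emptyset$ (only $\emptyset$ itself, handled by induction); the atomic cases $\sigma = \tau$ and $\sigma \in \tau$ were already implicitly covered by the earlier monotonicity lemma but should be re-checked here. Part (2), monotonicity/restriction, generalizes the lemma already proved for atomic formulas; for $\wedge$ it is immediate, for $\rightarrow$ one uses that subsets of $J'$ are subsets of $J$, for $\vee$ and the quantifiers one uses that a point of $J'$ is a point of $J$ and shrinks the witnessing $J'$ further by intersecting with the smaller set, invoking the induction hypothesis to transport the forcing fact down. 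Part (3), closure under unions, is the one requiring the most care: for $\wedge$ it is direct from the induction hypothesis applied componentwise; for $\vee$, $\exists$, $\forall$ it is almost immediate because those clauses are themselves ``local'' (phrased as ``for all $r \in J$ \ldots'') so a point of $\bigcup_i J_i$ lies in some $J_i$ and inherits its witness after intersecting with $\bigcup_i J_i$ (again using part (2) to enlarge from $J_i \cap J'$ to $(\bigcup_i J_i) \cap J'$ --- here one must check the directions of the intersections work out); the atomic cases reduce to the already-known behavior of $\Vdash$ on $=$ and $\in$.

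The step I expect to be the main obstacle is the implication case of part (3): showing that if $J_i \Vdash \phi \rightarrow \psi$ for all $i$ then $\bigcup_i J_i \Vdash \phi \rightarrow \psi$. Given $J' \subseteq \bigcup_i J_i$ with $J' \Vdash \phi$, one cannot simply say $J' \subseteq J_i$ for a single $i$. The fix is to set $J'_i := J' \cap J_i$; by part (2), $J'_i \Vdash \phi$, hence $J'_i \Vdash \psi$ since $J'_i \subseteq J_i$ and $J_i$ forces the implication; then $J' = \bigcup_i J'_i$ and part (3) applied to $\psi$ (at lower complexity, so available by the induction) gives $J' \Vdash \psi$. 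This is the place where the mutual dependence among the parts is genuinely exploited, so I would make sure the induction is organized as a single induction on the structure of $\phi$ proving the conjunction of all four statements simultaneously, rather than four separate inductions. The remaining cases ($\rightarrow$ for parts (1) and (2), and the quantifier cases throughout) are routine manipulations of the definitions along the same lines and I would not spell them out in detail.
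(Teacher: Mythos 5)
Your proposal is correct and follows essentially the same route as the paper: each part is proved by induction on $\phi$, part 4 is derived from part 3 by writing $J$ as a union of the witnessing intersections, and the only genuinely delicate step is the implication case of part 3, which you resolve exactly as the paper indicates by covering $J'$ with the sets $J' \cap J_i$ and invoking part 2 on the antecedent together with the inductive hypothesis for $\psi$. The paper leaves all of this as "trivial/easy induction" with just a pointer to that implication case, so your write-up simply supplies the details it omits.
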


\begin{proof}
1. Trivial induction. The one observation to make regards
negation, not mentioned above. As is standard, $\neg \phi$ is
taken as an abbreviation for $\phi \rightarrow \bot$, where $\bot$
is any false formula. Letting $\bot$ be ``0=1", observe that
$\emptyset \Vdash \bot$.

2. Again, a trivial induction.

3. Easy induction. The one case to watch out for is $\rightarrow$,
where you need to invoke the previous part of this lemma.

4. Trivial, using 3.
\end{proof}

\begin{lemma} Truth Lemma: For any node $r$, $r \models \phi$
iff $J \Vdash \phi$ for some $J$ containing $r$.
\end{lemma}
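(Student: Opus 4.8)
The plan is to prove the Truth Lemma by induction on the structure of $\phi$, with the two directions handled separately at each step. For the base cases $\sigma = \tau$ and $\sigma \in \tau$, the right-to-left direction ($J \Vdash \phi$ with $r \in J$ implies $r \models \phi$) is literally the definition of $=_M$ and $\in_M$ at a node, recorded earlier. The left-to-right direction requires a small argument: if $r \models \sigma \in \tau$, then by definition there is some $J$ with $r \in J$ and $J \Vdash \sigma \in_M \tau$, and one must observe that the extended forcing clauses for the primitive formulas agree with the originally-defined primitive forcing relation — which they do, since the two definitions are syntactically identical. So the base case is essentially bookkeeping.

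For the inductive step I would go connective by connective, using Lemma~\ref{helpful lemma} (especially the locality property, part 4, and monotonicity, part 2) repeatedly. The clauses for $\vee$, $\exists$, and $\forall$ are already phrased in the ``local'' form ``for all $r \in J$ there is a $J' \ni r$ with $J \cap J' \Vdash \dots$'', which is exactly what matches Kripke satisfaction at a node together with the fact (from part 4 of the helpful lemma) that forcing itself is local; so for these the two directions essentially unwind the definitions, invoking the induction hypothesis on the immediate subformulas. The case of $\rightarrow$ is the one to be careful about: for $r \models \phi \rightarrow \psi$ one must produce a single $J \ni r$ forcing the implication, and the natural candidate — shrinking to a neighborhood on which every sub-neighborhood forcing $\phi$ also forces $\psi$ — needs the monotonicity and locality lemmas plus the fact that Kripke satisfaction of an implication at $r$ quantifies over all successor nodes, which here are points infinitesimally near $r$, i.e. points in every neighborhood of $r$; combined with elementarity (so that it doesn't matter at which $M_n$ we evaluate), this should go through. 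The clause for $\wedge$ is immediate from the induction hypothesis and part 4.

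The main obstacle I expect is the $\rightarrow$ case, and more specifically making precise the interplay between (a) Kripke satisfaction at $r$, which ranges over \emph{all} nodes above $r$ in the tree (points in later $M_n$'s infinitesimally close to $r$, and their successors, etc.), and (b) the forcing clause for $\rightarrow$, which ranges over open subsets $J' \subseteq J$. The bridge is that a basic open set containing $r$ ``sees'' exactly the successors of $r$ in the relevant sense, and that by the elementarity remarks the forcing relation is preserved by the transition maps $f$, so an implication forced by a neighborhood of $r$ remains forced by the image neighborhood at any later node. I would isolate this as the crux: show that $r \models \phi \rightarrow \psi$ implies there is $J \ni r$ with $J \Vdash \phi \rightarrow \psi$, arguing contrapositively — if no such $J$ works, then by locality one finds arbitrarily small $J' \ni r$ with $J' \Vdash \phi$ but $J' \not\Vdash \psi$, and then (using the induction hypothesis for $\psi$ at $r$, together with passing to a successor node if necessary) one extracts a successor of $r$ forcing $\phi$ but not $\psi$, contradicting $r \models \phi \rightarrow \psi$. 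The converse direction, $J \Vdash \phi \rightarrow \psi$ with $r \in J$ implies $r \models \phi \rightarrow \psi$, is easier: any successor $r'$ of $r$ lies in $f(J)$ (or an iterate), and if $r' \models \phi$ then by induction some $J'' \ni r'$ forces $\phi$; intersecting with $f(J)$ and applying the forcing clause gives a neighborhood forcing $\psi$, hence $r' \models \psi$ by induction.
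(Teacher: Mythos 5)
Your overall strategy is the paper's: induction on $\phi$, an easy right-to-left direction, and a contrapositive argument for the hard left-to-right cases using locality of forcing (Lemma \ref{helpful lemma}, part 4) and the identification of successor nodes with infinitesimally close points. But two steps, as written, do not go through. First, in the $\rightarrow$ case you assert that the failure of every $J \ni r$ to force $\phi \rightarrow \psi$ yields ``arbitrarily small $J' \ni r$ with $J' \Vdash \phi$ but $J' \not\Vdash \psi$.'' It does not: the forcing clause for $\rightarrow$ quantifies over \emph{all} $J' \subseteq J$, so the counterexample $J'$ need not contain $r$, and for a standard-sized $J$ the bad point $r'$ extracted from $J'$ via locality may sit far from $r$, hence is not a successor node of $r$, and no contradiction with $r \models \phi \rightarrow \psi$ results. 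The repair --- and the one place the infinitesimals genuinely earn their keep --- is to pass to $M_{n+1}$ and take $J$ itself to be an \emph{infinitesimal} neighborhood of $r$ (legitimate because, by elementarity, ``no neighborhood of $r$ forces $\phi \rightarrow \psi$'' transfers upward); then every point of every $J' \subseteq J$ is infinitesimally close to $r$ and therefore \emph{is} a successor node, and the contradiction goes through. Your sketch gestures at this (``passing to a successor node if necessary'') but omits the step that makes the gesture legal.

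Second, you file $\forall$ with $\vee$ and $\exists$ as ``essentially unwinding the definitions.'' Right-to-left is indeed routine, but left-to-right is not: from $r \models \forall x \, \phi(x)$ you get, for each term $\sigma$, some $J_\sigma \ni r$ with $J_\sigma \Vdash \phi(\sigma)$, and there is no reason the intersection over all (a proper class of) $\sigma$ should be open, nor that a single $J$ should force $\phi(\sigma)$ locally at every one of its points for every $\sigma$. The paper handles $\forall$ by exactly the same device as $\rightarrow$: supposing no $J \ni r$ forces $\forall x \, \phi(x)$, take $J$ infinitesimal around $r$ in $M_{n+1}$, extract $r' \in J$ and $\sigma$ such that no neighborhood of $r'$ forces $\phi(\sigma)$, conclude $r' \not\models \phi(\sigma)$ by induction, and contradict $r \models \forall x \, \phi(x)$ at the successor node $r'$. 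So the tool you correctly isolated as the crux for $\rightarrow$ is needed in one more place than your plan allows.
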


\begin{proof}
Again, by induction on $\phi$, this time in detail for a change.

In all cases, the right-to-left direction (``forced implies true")
is pretty easy, by induction. (Note that only the $\rightarrow$
case needs the left-to-right direction in this induction.) Hence
in the following we show only left-to-right (``if true at a node
then forced").

=: This is exactly the definition of =.

$\in$: This is exactly the definition of $\in$.

$\wedge$: If $r \models \phi \wedge \psi$, then $r \models \phi$
and $r \models \psi$. Inductively let $r \in J \Vdash \phi$ and $r
\in J' \Vdash \psi. \; J \cap J'$ suffices.

$\vee$: If $r \models \phi \vee \psi$, then without loss of
generality $r \models \phi$ . Inductively let $r \in J \Vdash
\phi$. $J$ suffices.

$\rightarrow$: Suppose to the contrary $r \models \phi \rightarrow
\psi$ but no open set containing $r$ forces such. Work in an
infinitesimal neighborhood $J$ around $r$. Since $J \not\Vdash
\phi \rightarrow \psi$ there is a $J' \subseteq J$ such that $J'
\Vdash \phi$ but $J' \not\Vdash \psi$.  By the previous part of
this lemma, there is an $r' \in J'$ such that no open set
containing $r'$ forces $\psi$. At the node $r'$, by induction, $r'
\not\models \psi$, even though $r' \models \phi$ (since $r' \in J'
\Vdash \phi$). This contradicts the assumption on $r$ (i.e. that
$r$ $\models \phi \rightarrow \psi$), since $r'$ extends $r$ (as
nodes).

$\exists$: If $r \models \exists x \; \phi(x)$ then let $\sigma$
be such that $r \models \phi(\sigma)$. Inductively there is a $J$
containing $r$ such that $J \Vdash \phi(\sigma)$. $J$ suffices.

$\forall$: Suppose to the contrary $r \models \forall x \;
\phi(x)$ but no open set containing $r$ forces such. Work in an
infinitesimal neighborhood $J$ around $r$. Since $J \not\Vdash
\forall x \; \phi(x)$ there is an $r' \in J$ and $\sigma$ such
that for all $J'$ containing $r' \; J \cap J' \not\Vdash
\phi(\sigma)$. That is, no open set containing $r'$ forces
$\phi(\sigma)$. Hence at the node $r'$, by induction, $r'
\not\models \phi(\sigma)$. This contradicts the assumption on $r$
(i.e. that $r$ $\models \forall x \; \phi(x)$).

\end{proof}

\subsection{The Final Proof}
We now want to show that our model $M$ satisfies certain global
properties. If it had a bottom element $\bot$, then we could
express what we want by saying $\bot \models \phi$ for certain
$\phi$. But it doesn't. Hence we use the abbreviation $M \models
\phi$ for ``for all nodes $r$, $r \models \phi$."

\begin{theorem}
$M \models$ IZF$_{Ref}$.
\end{theorem}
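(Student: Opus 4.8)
The plan is to verify the axioms of IZF$_{Ref}$ one at a time at an arbitrary node $r$, using the Truth Lemma throughout to pass freely between ``$r \models \phi$'' and ``$J \Vdash \phi$ for some open $J$ with $r \in J$''. The intuitionistic predicate logic with equality is already secured, since we have checked that $M$ is a genuine Kripke model (frame, first-order structures at each node, transition maps that are $=$- and $\in$-homomorphisms, Lemma~\ref{equalitylemma}); so only the non-logical axioms remain. Two structural facts do essentially all the work. First, for a node $r$ of level $n$, the whole submodel of $M$ on $r$ and its successors — terms, the forcing relation $\Vdash$, and satisfaction $\models$ — is a class definable in $M_n$, and $M_n \models$ ZF; so whenever an axiom asks for a witnessing set, I would construct the corresponding term inside $M_n$ by an ordinary ZF argument, invoking Separation, Power Set, Replacement or Reflection as available there. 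Second, the construction is uniform in $n$, so $f$ carries a witness at $r$ to a witness at any successor $r'$; combined with $M_n \prec M_{n+1}$ and the noted equivalence ``$J \Vdash \phi$ in $M_n$ iff $f(J) \Vdash f(\phi)$ in $M_{n+1}$'', this keeps the witness correct at all later nodes, which is exactly what the Kripke clauses for $\forall$ and $\to$ require.

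For the bounded axioms this is routine. \emph{Extensionality} is almost built into the definition of $=$: if $r \models \forall z(z \in \sigma \leftrightarrow z \in \tau)$, pick $J \ni r$ forcing it and read off from the forcing clauses (using Lemma~\ref{equalitylemma}(1) and Lemma~\ref{helpful lemma}(3)) that $J \Vdash \sigma = \tau$. \emph{Pairing} of $\sigma,\tau$ is witnessed by $\{\langle \sigma,\mathbb{R}\rangle,\langle \tau,\mathbb{R}\rangle\}$, \emph{Union} of $\sigma$ by $\{\langle \rho, J_i \cap J_j\rangle : \langle \sigma_i,J_i\rangle \in \sigma,\ \langle \rho,J_j\rangle \in \sigma_i\}$, and \emph{Infinity} by the canonical name $\hat{\omega}$ (together with the names $\hat{n}$ and the Truth Lemma). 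For \emph{Separation} with an arbitrary formula $\phi$ — here we need full Separation, not merely the bounded instances, which is legitimate precisely because $\models$ is definable in $M_n$ — given $\sigma$ set $J_i^{\phi} := \bigcup\{J' : J' \Vdash \phi(\sigma_i)\}$ (open, and forcing $\phi(\sigma_i)$ by Lemma~\ref{helpful lemma}(3)) and take the term $\{\langle \sigma_i, J_i \cap J_i^{\phi}\rangle : \langle \sigma_i,J_i\rangle \in \sigma\}$. For \emph{Power Set}, collect into one term (all labels $\mathbb{R}$) every term of the form $\{\langle \sigma_i, K_i\rangle : \langle \sigma_i,J_i\rangle \in \sigma\}$ with each $K_i \subseteq J_i$ open — set-many of these exist in $M_n$ — and check that every subset of $\sigma$ in the model is forced equal to one of them. \emph{Set Induction} follows from the Truth Lemma plus well-foundedness of the term structure: terms at level $n$ are built by recursion on the ordinals of $M_n$, so if $r \not\models \forall x\,\phi(x)$ one descends (inside $M_n$, or inside $M_{n'}$ at the offending successor) to a term of least stage failing $\phi$, contradicting $r \models \forall x(\forall y \in x\,\phi(y) \to \phi(x))$.

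The axiom that genuinely uses something more is the \emph{Reflection schema}, and I expect this to be the main obstacle. Given $\phi(\vec x)$ and parameters $\vec\sigma$ at a node $r$ of level $n$, work in $M_n$, and for an ordinal $\alpha$ of $M_n$ put $\tau_\alpha := \{\langle \rho,\mathbb{R}\rangle : \rho \text{ a term of stage} < \alpha\}$. This term is transitive in the model (if $\rho' \in_M \rho$ with $\rho$ of stage $<\alpha$, then $\rho'$ is forced equal to some $\rho'' \in \rho$ of still smaller stage, so $\rho' \in_M \tau_\alpha$), and relativizing a quantifier to $\tau_\alpha$ amounts, up to $=_M$, to restricting it to terms of stage $<\alpha$. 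Now apply the Reflection schema \emph{of $M_n$} (ZF proves it) to the formula defining Kripke satisfaction of $\phi$ and its subformulas over the submodel at $r$: this yields an $\alpha$, which we may take above the stages of $\vec\sigma$, such that the Kripke value of $\phi$ with parameters of stage $<\alpha$ is unchanged when all quantifiers are cut down to stage $<\alpha$. Then $r \models \mathrm{Trans}(\tau_\alpha) \wedge \vec\sigma \in \tau_\alpha \wedge \forall \vec x \in \tau_\alpha\,(\phi(\vec x) \leftrightarrow \phi^{\tau_\alpha}(\vec x))$. The delicate part is the universal clause at \emph{later} nodes $r'$, where $\tau_\alpha$ has become $f(\tau_\alpha) = \tau_{f(\alpha)}$ and there are genuinely new candidate witnesses for $\vec x$ (terms built with ordinals nonstandard over $M_n$); one must argue that ``$\alpha$ reflects Kripke-$\phi$'' is itself an $M_n$-statement, hence carried by $f$ and $M_n \prec M_{n+1}$ to ``$f(\alpha)$ reflects Kripke-$\phi$'' in $M_{n+1}$, and so on up the tower, so that the reflection property of $\tau_\alpha$ persists at every $r' \geq r$. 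Everything else is bookkeeping with the forcing clauses and the Truth Lemma.
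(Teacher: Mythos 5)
Your proposal is correct and follows essentially the same route as the paper: axiom-by-axiom verification via the Truth Lemma, exploiting the definability of the restricted model and its satisfaction relation in $M_n$, with the same witnessing terms for Pairing, Union, Infinity, Separation, Power Set (your terms with shrunken labels $K_i \subseteq J_i$ are exactly the paper's ``normal form subsets''), and the same minimal-rank descent for Set Induction. The only divergence is cosmetic: for Reflection the paper takes a $\Sigma_k$-elementary substructure $X \prec_k M_n$ and uses $\{\langle\rho,\mathbb{R}\rangle \mid \rho \in X \text{ a term}\}$, whereas you apply L\'evy reflection to the term-stage hierarchy $\tau_\alpha$ — two interchangeable packagings of the same appeal to reflection in the ambient ZF model, with persistence at later nodes handled by elementarity of $f$ in both cases.
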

\begin{proof}
Note that, as a Kripke model, the axioms of intuitionistic logic
are satisfied, by general theorems about Kripke models.

\begin{itemize}
\item Infinity: $\hat \omega$ will do. (Recall
that the canonical name $\hat{x}$ of any set $x \in M_n$ is
defined inductively as $\{ \langle \hat{y}, \mathbb{R} \rangle
\mid y \in x \}.)$
\item Pairing: Given $\sigma$ and $\tau$, $\{ \langle \sigma,
{\mathbb R} \rangle , \langle \tau, {\mathbb R} \rangle \}$ will
do.
\item Union: Given $\sigma$, $\{ \langle \tau, J \cap J_i \rangle
\mid$ for some $\sigma_i, \; \langle \tau, J \rangle \in \sigma_i$
and $\langle \sigma_i, J_i \rangle \in \sigma \}$ will do.

\item Extensionality: We need to show that $\forall x \;
\forall y \; [\forall z \; (z \in x \leftrightarrow z \in y)
\rightarrow x = y]$. So let $\sigma$ and $\tau$ be any terms at a
node $r$ such that $r \models ``\forall z \; (z \in \sigma
\leftrightarrow z \in \tau)"$. We must show that $r \models
``\sigma = \tau"$. By the Truth Lemma, let $r \in J \Vdash
``\forall z \; (z \in \sigma \leftrightarrow z \in \tau)"$; i.e.
for all $r' \in J, \rho$ there is a $J'$ containing $r'$ such that
$J \cap J' \Vdash \rho \in \sigma \leftrightarrow \rho \in \tau$.
We claim that $J \Vdash ``\sigma = \tau"$, which again by the
Truth Lemma suffices. To this end, let $\langle \sigma_i, J_i
\rangle$ be in $\sigma$; we need to show that $J \cap J_i \Vdash
\sigma_i \in \tau$. Let $r'$ be an arbitrary member of $J \cap
J_i$ and $\rho$ be $\sigma_i$. By the choice of $J$, let $J'$
containing $r'$ be such that $J \cap J' \Vdash \sigma_i \in \sigma
\leftrightarrow \sigma_i \in \tau$; in particular, $J \cap J'
\Vdash \sigma_i \in \sigma \rightarrow \sigma_i \in \tau$. It has
already been observed in \ref{equalitylemma}, part 1, that $J \cap
J' \cap J_i \Vdash \sigma_i \in \sigma$, so $J \cap J' \cap J_i
\Vdash \sigma_i \in \tau$. By going through each $r'$ in $J \cap
J_i$ and using \ref{helpful lemma}, part 3, we can conclude that
$J \cap J_i \Vdash \sigma_i \in \tau$, as desired. The other
direction ($``\tau \subseteq \sigma"$) is analogous.

\item Set Induction (Schema): Suppose $r \models ``\forall x \;
((\forall y \in x \; \phi(y)) \rightarrow \phi(x))"$, where $r \in
M_n$ ; by the Truth Lemma, let $J$ containing $r$ force as much.
We must show $r \models ``\forall x \; \phi(x)"$. Suppose not.
Using the definition of satisfaction in Kripke models, there is an
$r' \in M_{n'}$ extending (i.e. infinitesimally close to) $r$
(hence in $J$ in the sense of $M_{n'}$) and a $\sigma$ such that
$r' \not \models f(\phi)(\sigma)$ ($f$ the transition from from
node $r$ to $r'$). By elementarity, there is such an $r'$ in
$M_n$. Let $\sigma$ be such a term of minimal $V$-rank among all
$r'$s $\in J$. Fix such an $r'$. By the Truth Lemma (and the
choice of $J$), $r' \models ``(\forall y \in \sigma \; \phi(y))
\rightarrow \phi(\sigma)"$. We claim that $r' \models ``\forall y
\in \sigma \; \phi(y)"$. If not, then for some $r''$ extending
$r'$ (hence in $J$) and $\tau, r'' \models \tau \in f(\sigma)$ and
$r'' \not \models f(\phi)(\tau)$. Unraveling the interpretation of
$\in$, this choice of $\tau$ can be substituted by a term $\tau$
of lower $V$-rank than $\sigma$. By elementarity, such a $\tau$
would exist in $M_n$, in violation of the choice of $\sigma$,
which proves the claim. Hence $r' \models \phi(\sigma)$, again
violating the choice of $\sigma$. This contradiction shows that $r
\models ``\forall x \; \phi(x)"$.

\item Separation (Schema): Let $\phi(x)$ be a formula and $\sigma$
a term. Then $\{ \langle \sigma_i, J \cap J_i \rangle \mid \langle
\sigma_i, J_i \rangle \in \sigma$ and $J \Vdash \phi(\sigma_i) \}$
will do.

\item Power Set: A term $\bar{\sigma}$ is a normal form subset of
$\sigma$ if for all $\langle \sigma_i, \bar{J_i} \rangle \in
\bar{\sigma}$ there is a $J_i \supseteq \bar{J_i}$ such that
$\langle \sigma_i, {J_i} \rangle \in \sigma$. $\{ \langle
\bar{\sigma}, {\mathbb R} \rangle \mid \bar{\sigma}$ is a normal
form subset of $\sigma \}$ will do.

\item Reflection (Schema): Recall that the statement of Reflection
is that for every formula $\phi(x)$ (with free variable $x$ and
unmentioned parameters) and set $z$ there is a transitive set $Z$
containing $z$ such that $Z$ reflects the truth of $\phi(x)$ in
$V$ for all $x \in Z$. So to this end, let $\phi(x)$ be a formula
and $\sigma$ be a set at a node $r$ of level $n$. Let $k$ be such
that the truth of $\phi(x)$ at node $r$ and beyond is $\Sigma_{k}$
definable in $M_n$. In $M_{n}$, let $X$ be a set containing
$\sigma$, $r$, and $\phi$'s parameters such that $X \prec_{k}
M_{n}$. Let $\tau$ be $\{ \langle \rho, {\mathbb R} \rangle \mid
\rho \in X$ is a term\}. $\tau$ will do.
\end{itemize}
\end{proof}

Just as in the case of regular, classical forcing, there is a
generic element. In the case at hand, this generic can be
identified with the term $\{ \langle \hat r, J \rangle \mid $ $r$
is a rational, $J$ is an open interval from the reals, and $r < J
\}$, where $r < J$ if $r$ is less than each element of $J$. We
will call this term $G$. Note that at node $r$ (of level $n$),
every standard (in the sense of $M_n$) rational less than $r$ gets
into $G$, and no standard real greater than $r$ will ever get into
$G$. Of course, non-standard reals infinitesimally close to $r$
are still up for grabs.

It is important in the following that, if $r \models \sigma \in
\mathbb Q$, then there is a rational $q$ in the sense of $M_n$
($n$ $r$'s level) such that $r \models \sigma = q$. That's because
rationals are (equivalence classes of) pairs of naturals, and the
corresponding fact holds for naturals. And that last statement
holds because $M \models ``\hat {\mathbb N}$ is the set of natural
numbers", and the topological space on which the model is built is
connected. Hence, at $r$, a Cauchy sequence of rationals is just
what you'd think: a sequence with domain $\mathbb N$ in the sense
of $M_n$, range $\mathbb Q$ in the sense of $M_n$, with the right
Cauchy condition on it, which gets extended to a larger domain at
successors of $r$.

\begin{proposition}
$M \models ``G$ is a constructive Dedekind real, i.e. a located
left cut".
\end{proposition}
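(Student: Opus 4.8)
The plan is to verify, using the Truth Lemma, that at each node $r$ the term $G = \{\langle \hat q, J\rangle \mid q \in \mathbb Q,\ J \text{ an open interval},\ q < J\}$ satisfies the three defining clauses of a Dedekind cut: boundedness, openness, and locatedness. Throughout I would exploit the remark preceding the proposition: at a node $r$ of level $n$, if $r\models\sigma\in\mathbb Q$ then $r\models\sigma=\hat q$ for some genuine rational $q\in M_n$, so quantifiers over $\mathbb Q$ in the model can be replaced by quantifiers over actual rationals in the ground model $M_n$, and membership $\hat q \in G$ is governed by a simple forcing condition. Specifically, I would first record the basic computation: for a rational $q\in M_n$ and open $J\ni r$, one has $J\Vdash \hat q\in G$ iff $J' < J'$... more precisely iff for every $r'\in J$ there is a neighborhood $J'\ni r'$ with $J\cap J'$ forcing $\hat q = \hat{q'}$ for some $\langle \hat{q'}, J''\rangle\in G$ with $q<J''$; since the space is connected and the $\hat{q}$ are rigid, this reduces to: $J\Vdash \hat q\in G$ iff $q < J$ (i.e. $q$ is below every real in $J$), and $r\models \hat q\in G$ iff $q < r$.

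With that computation in hand the three clauses are almost immediate. For boundedness, at node $r$ pick any rational $q_0 < r$ in $M_n$; then $r\models \hat q_0\in G$, and pick any rational $q_1 > r$; then no neighborhood of $r$ forces $\hat q_1\in G$ (any such neighborhood contains points $< q_1$, but also, shrinking, we get $r'$ with $q_1 > r'$ fails... ) — here I must be slightly careful and argue that $r\not\models \hat q_1\in G$ because by the Truth Lemma a witnessing $J\ni r$ would need $q_1 < J$, impossible since $r\in J$ and $q_1 > r$. For openness, if $r\models\sigma\in G$ with $\sigma=\hat q$, $q<r$, then choosing any rational $q'$ with $q<q'<r$ gives $r\models \hat q<\hat{q'}\wedge \hat{q'}\in G$; the density of $\mathbb Q$ is being used inside $M_n$. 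For locatedness, given rationals $q<s$, I must show $r\models \hat q\in G \vee \hat s\notin G$; by the $\vee$ clause of the forcing definition it suffices that every $r'$ near $r$ has a neighborhood forcing one disjunct — and indeed, for $r'\neq q$ and $r'\neq s$ (a dense condition, and one can always shrink to avoid the two points $q,s$), a small enough neighborhood $J'\ni r'$ either lies entirely below $s$ (so forces $\hat s\notin G$, i.e. forces $\hat s\in G\to\bot$) or lies entirely above $q$ (so forces $\hat q\in G$); since $q<s$, every real other than possibly points in $[q,s]$... actually every real $r'$ satisfies $r' < s$ or $r' > q$, and by shrinking $J'$ we get the corresponding strict containment, giving the disjunction.

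The step I expect to be the main obstacle is the locatedness clause, specifically handling the $\vee$ in intuitionistic Kripke/forcing semantics correctly: I cannot decide globally on a neighborhood of $r$ which disjunct holds, so I must use the local character of $\Vdash$ for $\vee$ (Lemma \ref{helpful lemma}(4) and the definition of $J\Vdash\phi\vee\psi$) and verify that the cover of an infinitesimal neighborhood $J$ of $r$ by the sets $\{J' : J'<s\}$ and $\{J' : J'>q\}$ is genuinely a cover whenever $q<s$ — i.e. that $\mathbb R = \{x : x<s\}\cup\{x: x>q\}$, which holds precisely because $q<s$. A secondary subtlety is making the reduction "$r\models\sigma\in G$ implies $\sigma$ equals some standard rational below $r$" fully rigorous, which is exactly where the cited facts about $\hat{\mathbb N}$, $\hat{\mathbb Q}$, and connectedness of the space enter; I would state this as a small lemma and then the three clauses follow by routine unwinding. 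Finally I would note that $S = S^<$ for this $S$ follows, as remarked in the text, from openness and locatedness, so $G$ indeed names a located left cut at every node, giving $M\models$ "$G$ is a constructive Dedekind real."
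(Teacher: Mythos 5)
Your proposal is correct and follows essentially the same route as the paper: both reduce membership to the computation $r \models \hat{q} \in G$ iff $q < r$ and then verify the three clauses directly, your only real variation being that you establish locatedness by forcing the disjunction over the cover $(-\infty,s)\cup(q,\infty)=\mathbb{R}$, whereas the paper simply decides at the node which disjunct holds (``either $s<r$ or $r<t$''), which comes to the same thing. One point to tighten: in the boundedness clause you argue only that $r \not\models \hat{q}_1 \in G$, but what is needed is $r \models \hat{q}_1 \notin G$; this follows from the observation you already make in the locatedness step, namely that an interval lying entirely below $q_1$ (e.g.\ $(-\infty,q_1)$, which contains $r$) forces $\hat{q}_1 \in G \rightarrow \bot$, since none of its nonempty open subsets forces $\hat{q}_1 \in G$.
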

\begin{proof}
First off, $r \models ``r-1 \in G \; \wedge \; r+1 \not \in G"$.
Secondly, if $r \models `` s < t \in G$", then $\langle t, J
\rangle \in G$, where $t < J$ and $r \in J$. Hence $s < J$, so
$\langle s, J \rangle \in G$, and $r \models s \in G$. Finally,
suppose $r \models ``s, t \in {\mathbb Q} \wedge s < t"$. Either
$s < r$ or $r < t$. Since $s$ and $t$ are both standard (in the
sense of $M_n$, $n$ the level of $r$), either $r \models s \in G$
or $r \models t \not \in G$ respectively.
\end{proof}

In order to complete the theorem, we need only prove the
following:
\begin{proposition}
$M \models$ ``The Dedekind real G is not a Cauchy real."
\end{proposition}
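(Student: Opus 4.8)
The plan is to read ``$G$ is not a Cauchy real'' as the assertion that there is no fundamental sequence of rationals, together with its Cauchy modulus, whose associated left cut equals $G$; and to prove the stronger statement that \emph{no} node forces ``$G$ is a Cauchy real'', which by the Kripke semantics of negation yields $M\models$ ``$G$ is not a Cauchy real''. So suppose, toward a contradiction, that some node $r$, of level $n$, forces ``$G$ is a Cauchy real''. By the Kripke semantics of the existential quantifier there are terms $\sigma$ and $\tau$ at $r$ with $r\models$ ``$\sigma\colon\hat{\mathbb N}\to\hat{\mathbb Q}$ is a fundamental sequence, $\tau$ is a Cauchy modulus for it, and $X_\sigma=G$''.

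The first step is to recover honest ground-model data. Since $M\models$ ``$\hat{\mathbb N}$ is the set of naturals'' and the underlying space is connected, at node $r$ the members of $\hat{\mathbb N}$ are exactly $\hat0,\hat1,\dots$, and for each $k\in\mathbb N^{M_n}$ there are (by the remark preceding the previous proposition) a unique $q_k\in\mathbb Q^{M_n}$ with $r\models\sigma(\hat k)=\hat{q_k}$ and a unique $g(k)\in\mathbb N^{M_n}$ with $r\models\tau(\hat k)=\widehat{g(k)}$. Because the entire structure below $r$ is definable in $M_n$, the maps $k\mapsto q_k$ and $k\mapsto g(k)$ are functions $(q_k)_k,g\in M_n$, and consequently $r\models\sigma=\widehat{(q_k)_k}$ and $r\models\tau=\hat g$; moreover, transferring the forced ($\Delta_0$) Cauchy condition down to $M_n$, the sequence $(q_k)_k$ is in $M_n$ a genuine fundamental sequence of rationals with modulus $g$. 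Let $\ell=\lim_k q_k\in\R^{M_n}$. From $X_\sigma=\{s\in{\mathbb Q}:\exists m\ s<q_{g(m)}-2^{-m}\}$, together with $q_{g(m)}-2^{-m}\le\ell$ for every $m$ and $\sup_m(q_{g(m)}-2^{-m})=\ell$, one computes that at $r$ the members of $X_\sigma$ lying in $\mathbb Q^{M_n}$ are exactly $\{q\in\mathbb Q^{M_n}:q<\ell\}$, while the members of $G$ in $\mathbb Q^{M_n}$ are exactly $\{q\in\mathbb Q^{M_n}:q<r\}$ (this is the ``$G$ is the standard part of $r$'' observation). Since $r\models X_\sigma=G$, density of $\mathbb Q^{M_n}$ in $\R^{M_n}$ forces $\ell=r$.

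The second step exploits the indeterminacy of $G$ at successor nodes. By the Truth Lemma fix an open $J\ni r$ with $J\Vdash$ ``$\sigma=\widehat{(q_k)_k}\wedge\tau=\hat g\wedge X_\sigma=G$''. Choose an immediate successor $r'$ of $r$ (level $n+1$) with $r'\ne f(r)$ --- for instance $r'=f(r)+\varepsilon$ for a positive infinitesimal $\varepsilon\in M_{n+1}$ (one exists since $M_{n+1}$ has naturals exceeding all $f(k)$, $k\in\mathbb N^{M_n}$); then $r'$ is infinitesimally close to $f(r)$, has standard part $r$, and lies in $f(J)$ because $\varepsilon$ is smaller than the non-infinitesimal radius $f(\delta)$ of a ball $f((r-\delta,r+\delta))\subseteq f(J)$. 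Applying $f$ to the forcing fact and using that the canonical-name operation commutes with $f$, we get $f(J)\Vdash$ ``$\sigma=\widehat{f((q_k)_k)}\wedge\tau=\widehat{f(g)}\wedge X_\sigma=G$'', so all three hold at $r'$. Now $f((q_k)_k)$ is, in $M_{n+1}$, a fundamental sequence with modulus $f(g)$ and limit $f(\ell)=f(r)$; running the computation of the previous paragraph inside $M_{n+1}$ (so that ``standard'' now means ``in $M_{n+1}$''), the members of $X_\sigma$ in $\mathbb Q^{M_{n+1}}$ at $r'$ are exactly $\{q\in\mathbb Q^{M_{n+1}}:q<f(r)\}$, while the members of $G$ in $\mathbb Q^{M_{n+1}}$ at $r'$ are $\{q\in\mathbb Q^{M_{n+1}}:q<r'\}$. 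Since $f(r)\ne r'$, these sets differ, so $r'\not\models X_\sigma=G$, contradicting $r'\models X_\sigma=G$. Hence no node forces ``$G$ is a Cauchy real'', which completes the proof.

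I expect the main obstacle to be the first step: passing from the term $\sigma$ --- which a priori can ``vary'' over a neighborhood and which gets \emph{extended} at successor nodes --- to a single honest sequence in $M_n$, and confirming both that this identification is legitimate at $r$ and that $f$ transports it correctly to the successor $r'$. All the necessary ingredients are already available: connectedness of the space (so $\hat{\mathbb N}$ gains no new members at $r$ and rational-valued terms are locally constant), definability of the structure below each node in the corresponding $M_n$ (so the ground-model sequence genuinely exists there), and the fact that the canonical-name operation commutes with the transition maps $f$. Once these are in hand, the contradiction is simply the clash between the \emph{rigidity} of a Cauchy sequence with a modulus --- whose limit at level $n+1$ is locked to the single point $f(r)$ --- and the genuine freedom of $G$, which at level $n+1$ may be any real infinitesimally close to $f(r)$. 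The only other point requiring care is bookkeeping: the two cut computations and the ``standard part'' identification of $G$ must each be performed in the appropriate classical model ($M_n$, resp.\ $M_{n+1}$), where they are entirely routine.
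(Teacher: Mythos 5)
Your overall strategy is sound, and its second half coincides with the paper's Case I: once $\sigma$ is forced on a neighborhood of $r$ to equal a ground-model fundamental sequence, its limit is pinned to $r$ itself (since the rationals of $X_\sigma$ at $r$ are those below the limit and the rationals of $G$ at $r$ are those below $r$), and passing to a successor $r'\ne f(r)$ separates $G$ from $X_\sigma$ by a rational of $M_{n+1}$ strictly between $f(r)$ and $r'$. That part is correct.

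The gap is in your first step, and it is precisely the step on which the paper spends its Case II. From ``for each $k$ there is a unique $q_k$ with $r\models\sigma(\hat k)=\hat q_k$'' together with definability of satisfaction in $M_n$ you may conclude that the function $k\mapsto q_k$ exists as a set of $M_n$; you may \emph{not} then write ``consequently $r\models\sigma=\widehat{(q_k)_k}$''. By the Truth Lemma that statement requires a single open $J\ni r$ forcing the equality, hence requires that at \emph{every} point $t$ of some fixed neighborhood of $r$, and for \emph{every} $k$, the value of $\sigma(\hat k)$ is the same $q_k$ it has at $r$. Each $k$ a priori comes with its own deciding neighborhood $J_k\ni r$, and $\bigcap_k J_k$ need not be open; nothing you have said excludes a term that is total at $r$ while no single neighborhood of $r$ decides all of its values. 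Ruling this out is the content of the paper's Case II: fix a connected $J$ forcing totality of $\sigma$; for each $m$, the maximal open subsets of $J$ forcing $\sigma(\hat m)=\hat q$ and $\sigma(\hat m)\ne\hat q$ are disjoint and cannot cover the interval $J$, so if the value were not constant on $J$ there would be a point of $J$ at which $\sigma(\hat m)$ is undefined, contradicting totality. Only after this does one get, for each $k$, that $J$ itself forces $\sigma(\hat k)=\hat q_k$ (using the union property of $\Vdash$), and hence $J\Vdash\sigma=\widehat{(q_k)_k}$. You name the right ingredients (connectedness, local constancy of rational-valued terms) in your closing paragraph, but the argument that upgrades pointwise decidedness at $r$ to a uniform forcing neighborhood is never carried out, and it is the mathematically substantive part of the proof. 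The conclusion of your first step is true, so the gap is fillable; note also that the paper proves the stronger statement that $G$ is not the limit of any Cauchy sequence even without a modulus, so your appeal to the modulus $\tau$, while harmless for the proposition as stated, is not needed.
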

\begin{proof}
Recall that a Cauchy sequence is a function $f : \mathbb N
\rightarrow \mathbb Q$ such that for all $k \in \mathbb N$ there
is an $m_k \in \mathbb N$ such that, for all $i, j > m_k, \; f(i)$
and $f(j)$ are within $2^{-k}$ of each other. Classically such a
function $k \mapsto m_k$, called a modulus of convergence, could
be defined from $f$, but not constructively (see \cite{RSL3}).
Often in a constructive setting a real number is therefore taken
to be a pair of a Cauchy sequence and such a modulus (or an
equivalence class thereof). We will prove the stronger assertion
that $G$ is not even the limit of a Cauchy sequence, even without
a modulus. (A Dedekind real $Y$ is the limit of the Cauchy
sequence $f$ exactly when $r \in Y$ iff $r < f(m_k) - 2^{-k}$ for
some $k$, where $m_k$ is an integer as above.)

Suppose $r \models ``f$ is a Cauchy sequence". By the Truth Lemma,
there is an open set $J$ containing $r$ forcing the same. There
are two cases.

CASE I: There is some open set $J'$ containing $r$ forcing a value
$f(m)$ for each integer $m$ in $M_n$ (where $r \in M_n$). In this
case, $f$ is a ground model function; that is, in $M_n$, hence in
each $M_k$ with $k \geq n$, $g(m)$ can be defined as the unique
$l$ such that $J' \Vdash f(m) = \hat l$, and then $J' \Vdash f =
\hat g.$ Since classical logic holds in $M_n$, either $lim(f)$ is
bounded away from $r$, say by a distance of 2$^{-k}$, or it's not.

If it is, then $r \models G \not = lim(f)$, as follows. Let $J''$
be an interval around $r$ of length less than $2^{-k}$. $J''
\Vdash \hat r - 2^{\hat {-k}} \in G \; \wedge \hat r + 2^{\hat
{-k}} \not \in G$, while $f$ stays more than $2^{-k}$ away from
$r$.

If on the other hand $f$ is not bounded away from r, then the
condition ``$s < f(m_k) - 2^{-k}$ for some $k$" becomes simply
$``s < r"$. So then $f$ would witness that $s \in G$ iff $s < r$.
But this is false: if $r'$ is less than $r$ by an infinitesimal
amount, then $r' \models \hat{r'} < \hat r$ but $r' \not \models
\hat {r'} \in G$, and if $r'$ is greater than $r$ by an
infinitesimal amount, and $s$ is between $r$ and $r'$, then $r'
\models \hat s > \hat r$ but $r' \models \hat s \in G$.

CASE II: Not case I. That means that for any interval $J'$ around
$r$, however small, there is some argument $m$ to $f$ such that
$J'$ does not force any value $f(m)$. By elementarity, in
$M_{n+1}$ pick $J'$ to be some infinitesimally small neighborhood
around $r$, and $m$ such an argument. Pick some value $q$ that
$f(m)$ could have and the maximal (hence non-empty, proper, and
open) subset of $J'$ forcing $f(\hat m)=\hat q$. Pick the maximal
(hence non-empty, proper, and open) subset of $J'$ forcing $f(\hat
m)\not =\hat q$. These two subsets must be disjoint, lest the
intersection force a contradiction. But an open interval cannot be
covered by two disjoint, non-empty open sets. Hence there is an
infinitesimal $s$ in neither of those two subsets. Now consider
the Kripke model at node $s$. $f(m)$ is undefined at $s$.
Otherwise, by the Truth Lemma, there would be some interval $J$
containing $s$ such that $J \Vdash f(\hat m) = \hat p$ for some
particular rational $p$. Whether or not $p = q$ would force $s$
into one of the subsets or the other. Therefore, the node $r$
cannot force that $f$ is total, contradicting the hypothesis that
$r$ forced that $f$ was a Cauchy sequence.
\end{proof}
\underline{\bf Comments and Questions} Those familiar with the
proof via the (full) topological model, or sheaves, over $\mathbb
R$, as in \cite{FH} for instance, will realize that it's
essentially the same as the one above. In fact, the
topological/sheaf construction can be read off of the argument
above. All of the proofs are based on constructing the right term
and/or using an open set to force a statement. That is exactly
what's present in a topological model: the terms here are the
standard terms for a topological model, and the forcing relation
here is the standard topological semantics. So the Kripke
superstructure is actually superfluous for this argument.
Nonetheless, several questions arise.

What the Kripke structure has that the topological model doesn't
are the infinitesimals. Are they somehow hidden in the topological
model? Are they dispensable in the Kripke model? Or are the models
more than superficially different?

Also, is there some reason that the topology was necessary in the
Kripke construction? The authors started this project with the
idea of using a Kripke model, were led to infinitesimals, and did
not suspect that any topological ideas would be necessary. (In
some detail, suppose you're looking at a Dedekind cut in a node of
a Kripke model. By locatedness, if $p < q$ then one of those two
rationals gets put into either the lower or the upper cut; that
is, we can remain undecided about the placement of at most one
rational, which for simplicity we may as well take to be 0. Then
why doesn't the Cauchy sequence $1/n$ name this cut? That can
happen only if, at some later node, the cut no longer looks to be
around 0. But how can that happen if all other rationals are
already decided? Only if at this later node there are new
rationals that weren't there at the old node. This leads directly
to indexing nodes by infinitesimals, and having the cut look at
any node as though it's defining the infinitesimal at that node.
Notice that there seems to be no reason to use topology here.) It
was only after several attempts to define the terms, with their
equality and membership relations, using just the partial order
all failed that they were driven to the current, topological
solution. Since this all happened before we became aware of the
earlier Fourman-Hyland work, it is not possible that we were
somehow pre-disposed toward turning to topology. Rather, it seems
that topology is inherent in the problem. Is there some way to
make that suggestion precise and to see why it's true?

Indeed, this question becomes even more pressing in light of the
next section. There topology is used in a similar way, but the
terms and the semantics are like nothing we have seen before.
Indeed, the construction following could not be in its essence a
topological model of the kind considered so far in the literature,
since the latter always model IZF, whereas the former will falsify
Power Set (satisfying Exponentiation in its stead). So if there
were some method to read off the topology from the problem in this
section, it would be of great interest to see what that method
would give us in the next problem.

There are other, soft reasons to have included the preceding
construction, even though it adds little to the Fourman-Hyland
argument. Conceivably, somebody could want to know what the
paradigmatic Kripke model for the Cauchy and Dedekind reals
differing is, and this is it. It also provides a nice warm-up for
the more complicated work of the next section, to which we now
turn.

\section{The Dedekind Reals Are Not a Set}
\begin{theorem}
CZF$_{Exp}$ (i.e. CZF with Subset Collection replaced by
Exponentiation) does not prove that the Dedekind reals form a set.
\end{theorem}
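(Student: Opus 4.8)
The plan is to build a Kripke model along the same lines as the one in Section~2, but with a more elaborate frame and term structure so that Power Set (and indeed Subset Collection) fails, while Exponentiation and the rest of CZF survive, and the Dedekind reals form a proper class. The key idea for making $\mathbb{R}^d$ a proper class is to arrange that the model produces, at each stage, a fresh supply of Dedekind reals indexed by something unbounded in the ordinals -- so that no single term can collect them all -- while keeping the Cauchy reals (and the ground-model sets) tame. Concretely, I would replace the single generic real $G$ of Section~2 by a whole class-sized family of generics, one ``copy'' of the previous construction for each ordinal (or each ground-model set), glued together via an extended topological/infinitesimal semantics; the techniques of \cite{RSL1} are invoked precisely to manage this iteration and to verify that the resulting structure still validates the CZF axioms minus Subset Collection.

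The steps, in order, would be: (1) Define the frame -- again an $\omega$-indexed tree of models $M_n$ with infinitesimally-close successors, but now each node carries a larger parameter space so that ``new'' generic cuts can appear at successors in a controlled way. (2) Define the terms: these will \emph{not} be the standard terms of a topological model, since the final model must falsify Power Set; the second components will be open sets from a non-standard space (a product or sum indexed by the ground-model ordinals), and one has to be careful that the term hierarchy is set-like at each node even though globally the Dedekind reals escape. (3) Define the forcing relation $\Vdash$ for primitive formulas and then for all formulas, exactly paralleling the definitions and the basic lemmas (monotonicity under $J'\subseteq J$, the union lemma, the locality lemma) of Section~2, and prove the Truth Lemma. (4) Verify the equality axioms and then go through the CZF$_{Exp}$ axioms one at a time -- Extensionality, Pairing, Union, Infinity, Set Induction, Bounded Separation, Strong Collection, and crucially Exponentiation -- showing for each the appropriate witnessing term can be built inside the relevant $M_n$ (using, as in Section~2, that the whole model restricted to a node is definable there). (5) Produce the class of Dedekind reals: exhibit an explicit $\mathrm{Ord}$-indexed (or $V$-indexed) family of terms each of which is forced to be a located left cut, pairwise forced distinct, and then show that \emph{no} term $\sigma$ can satisfy $r\models ``\sigma$ is the set of all Dedekind reals$"$, by a pigeonhole/rank argument: $\sigma$ lives at some $M_n$ with bounded rank, but the family of cuts has members of unbounded rank, and distinctness is preserved under the transition maps, so $\sigma$ must miss one.

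The main obstacle -- and the reason \cite{RSL1} is needed -- is step~(4) together with the tension it creates with step~(5): I need the term structure rich enough that a proper class of genuinely distinct Dedekind cuts exists and survives all transition maps and the modding-out by $=_M$, yet impoverished enough that Exponentiation still holds (so the model is not trivially inconsistent with having $\mathbb{R}^d$ a proper class) and that Power Set provably fails. Getting Exponentiation to hold is delicate precisely because function spaces are ``small'' relative to power sets, so one must check that $\hoch{\sigma}{\tau}$ can be named by a set-term at each node despite the non-standard topology; and getting Power Set to fail requires showing that the would-be power-set term of, say, $\hat{\mathbb{N}}$ or of $\hat{1}$ is not available, which is essentially the same combinatorial heart as the proper-class-of-reals argument. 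I expect the verification of Strong Collection and Exponentiation in the presence of the enlarged, non-standard open-set algebra to be where almost all the real work lies; the Dedekind-real-is-a-cut computations and the final cardinality/rank argument should be comparatively routine adaptations of the Section~2 proofs.
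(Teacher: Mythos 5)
There is a genuine gap here, and it is the central one: your plan never identifies a mechanism that actually prevents the Dedekind reals from being collected, and the mechanism you do propose --- an $\mathrm{Ord}$-indexed family of generic cuts of unbounded term-rank, defeated by a pigeonhole/rank argument --- does not work. A Dedekind cut is a subset of $\hat{\mathbb{Q}}$, and in any semantics resembling that of Section~2 every such subset is $=_M$ to a normal-form term of uniformly bounded rank (this is exactly how Power Set is verified in Section~2, where the Dedekind reals \emph{do} form a set). So distinctness of your cuts cannot be witnessed by unboundedness of rank: whatever high-rank term you write down for a cut, it collapses to a low-rank one, and the set of all low-rank normal-form subsets of $\hat{\mathbb{Q}}$ would then name $\mathbb{R}^d$. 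Multiplying the generics, whether by ordinals or by a product topology, does not change this; the obstruction to sethood has to be of a different character.

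What the paper actually does is temporal rather than cardinality-based, and this is the idea your proposal is missing. The frame and the single generic $G$ are kept as in Section~2, but each edge of the tree carries \emph{two} transition functions: the elementary embedding $f$, and a ``settling down'' map $g(\sigma)=f(\sigma)^s$ which collapses every term to a ground-model set (terms acquire extra components $\langle\sigma_h,r_h\rangle$ with real-number second coordinates to support this, and every clause of the forcing relation gets a corresponding second conjunct). The point is then: $G$ is a Dedekind cut that is not equal to any ground-model real, but after any $g$-transition it \emph{becomes} one ($G^s=\hat{s}$, essentially), and a fresh generic reappears; meanwhile any candidate term $\sigma$ for the set of all Dedekind reals satisfies that $g(\sigma)$ is a ground-model term, so membership of the new generic in $g(\sigma)$ would force it equal to a ground-model real --- contradiction. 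This erasure-and-reappearance device (the actual content of the citation to \cite{RSL1}) is also what makes Exponentiation survive while Power Set fails: functions between ground-model sets are forced, by a connectedness/overspill argument, to be ground-model functions, so the exponential can be named, whereas subsets of $1$ keep being created anew after each settling. Without the settling-down semantics, the two halves of your step (4)/(5) tension cannot both be resolved, and your step (5) as written is false.
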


\subsection{The Construction}
Any model showing what is claimed must have certain properties.
For one, the Dedekind reals cannot equal the Cauchy reals (since
CZF$_{Exp}$ proves that the Cauchy reals are a set). Hence the
current model takes its inspiration from the previous one. Also,
it must falsify Subset Collection (since CZF proves that the
Dedekind reals are a set). Hence guidance is also taken from
\cite{RSL1}, where such a model is built.

The idea behind the latter is that a (classical, external)
relation $R$ on $\mathbb N$ keeps on being introduced into the
model via a term $\rho$ but at a later node ``disappears"; more
accurately, the information $\rho$ contains gets erased, because
$\rho$ grows into all of $\mathbb N \times \mathbb N$, thereby
melting away into the other sets present (to give a visual image).
Since $R$ is chosen so that it doesn't help build any functions,
$\rho$ can be ignored when proving Exponentiation. On the other
hand, while you're free to include $\rho$ in an alleged full set
of relations, by the next node there is no longer any trace of
$R$, so when $R$ reappears later via a different term $\rho '$
your attempt at a full set no longer works.

In the present context, we will do something similar. The
troublesome relation will be (essentially) the Dedekind real $G$
from the previous construction. It will ``disappear" in that,
instead of continuing to change its mind about what it is at all
future nodes, it will settle down to one fixed, standard real at
all next nodes. But then some other real just like $G$ will appear
and pull the same stunt.

We now begin with the definition of the Kripke model, which
ultimately is distributed among the next several definitions.

\begin{definition} The underlying p.o. of the Kripke model
is the same as above: a (non-rooted) tree with $\omega$-many
levels, the nodes on level $n$ being the reals from $M_n$. $r'$ is
an immediate successor of $r$ iff $r$ is a real from some $M_n$,
$r'$ is a real from $M_{n+1}$, and $r$ and $r'$ are
infinitesimally close; that is, $f(r) - r'$, calculated in
$M_{n+1}$ of course, is infinitesimal, calculated in $M_n$ of
course. In other words, in $M_n$, $r$ is that standard part of
$r'$.
\end{definition}

\begin{definition} A term at a node of height $n$ is a set of the
form $\{ \langle \sigma_i, J_i \rangle \mid i \in I \} \cup \{
\langle \sigma_h, r_h \rangle \mid h \in H \}$, where each
$\sigma$ is (inductively) a term, each $J$ an open set of reals,
each $r$ a real, and $H$ and $I$ index sets, all in the sense of
$M_n$.
\end{definition}

The first part of each term is as in the previous section: at node
$r$, $J_i$ counts as true iff $r \in J_i$. The second part plays a
role only when we decide to have the term settle down and stop
changing. This settling down in described as follows.

\begin{definition} For a term $\sigma$ and real r $\in$ M$_n$,
$\sigma^{r}$ is defined inductively in M$_{n}$ on the terms as $\{
\langle \sigma_{i}^{r}, {\mathbb R} \rangle \mid \langle
\sigma_{i}, J_{i} \rangle \in \sigma \wedge r \in J_{i} \} \cup \{
\langle \sigma_h^r, {\mathbb R} \rangle \mid \langle \sigma_h, r
\rangle \in \sigma \}$.
\end{definition}

Note that $\sigma^r$ is (the image of) a set from the ground
model. It bears observation that $(\sigma^r)^s = \sigma^r$.

What determines when a term settles down in this way is the
transition function. In fact, from any node to an immediate
successor, there will be two transition functions, one the
embedding $f$ as before and the other the settling down function.
This fact of the current construction does not quite jive with the
standard definition of a Kripke model, which has no room for
alternate ways to go from one node to another. However, this move
is standard (even tame) for categorical models, which allow for
arbitrary arrows among objects. So while the standard categorical
description of a partial order is a category where the objects are
the elements of the order and there's an arrow from $p$ to $q$ iff
$p \leq q$, the category we're working in has two arrows from $p$
to $q$ (for immediate successors). If you're still uncomfortable
with this double arrow, or object to calling this object a Kripke
model, then double not the arrows but the nodes. That is, replace
each node $s$ by two nodes $s_{old}$ and $s_{new}$, and have the
two arrows go to these two separate nodes. Now you have a very
traditional Kripke model again. To save on subscripts, we will
work instead with two arrows going from $r$ to $s$.

\begin{definition} If s is an immediate successor of r, then there
are two transition functions from r to s, called f and g. f is the
elementary embedding from M$_n$ to M$_{n+1}$ as applied to terms.
g($\sigma$) = f($\sigma)^s$. Transition functions to non-immediate
successors are arbitrary compositions of the immediate transition
functions.
\end{definition}

When considering $g(\sigma$), note that $\sigma \in M_n$ and $s
\in M_{n+1}$. However, for purposes other than the transition
functions, we will have occasion to look at $\sigma^s$ for both
$\sigma$ and $s$ from $M_n$. In this case, please note that, since
$f$ is an elementary embedding, $(f(\sigma))^s = f(\sigma^s$).

It's easy to see that for $\sigma$ a (term for a) ground model
set, $f(\sigma$) is also a ground model set, and for $\tau$ from
the ground model (such as $f(\sigma$)) so is $\tau^r$. Hence in
this case $f(\sigma) = g(\sigma$).

We do not need to show that the transition functions are
well-defined, since they are defined on terms and not on
equivalence classes of terms. However, once we define =, we will
show that = is an equivalence relation and that $f$ and $g$
respect =, so that we can mod out by = and still consider $f$ and
$g$ as acting on these equivalence classes.

Speaking of defining =, we now do so, simultaneously with $\in$
and inductively on the terms, like in the previous section. In an
interplay with the settling down procedure, the definition is
different from in the previous section.

\begin{definition}
$J \Vdash \sigma =_{M} \tau$ and $J \Vdash \sigma \in_{M} \tau$
are defined inductively on $\sigma$ and $\tau$, simultaneously for
all open sets of reals J:

$J \Vdash \sigma =_{M} \tau$ iff for all $\langle \sigma_{i},
J_{i} \rangle \in \sigma \; J \cap J_{i} \Vdash \sigma_{i} \in_{M}
\tau$ and for all $r \in J \; \sigma^r = \tau^r$, and vice versa.

$J \Vdash \sigma \in_{M} \tau$ iff for all $r \in J$ there is a
$\langle \tau_{i}, J_{i} \rangle \in \tau$ and $J' \subseteq J$
such that $r \in J' \cap J_{i} \Vdash \sigma =_{M} \tau_{i}$, and
for all $r \in J \; \langle \sigma^r, {\mathbb R} \rangle \in
\tau^r$.
\end{definition}

(We will later extend this forcing relation to all formulas.)

\begin{definition}
At a node $r$, for any two terms $\sigma$ and $\tau$, $r \models
\sigma =_{M} \tau$ iff, for some $J$ with $r \in J, \; J \Vdash
\sigma =_{M} \tau$.

Also, $r \models \sigma \in_{M} \tau$ iff for some $J$ with $r \in
J, \; J \Vdash \sigma \in_{M} \tau$.
\end{definition}
Thus we have a first-order structure at each node.

\begin{corollary}
The model just defined is a Kripke model. That is, the transition
functions are $=_{M}$ and $\in_{M}$-homomorphisms.
\end{corollary}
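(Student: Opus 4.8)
The plan is to verify directly that each of the two transition functions, $f$ and $g$, preserves the primitive relations $=_M$ and $\in_M$, since once that is known for immediate-successor arrows it extends to arbitrary compositions automatically. So I must show: if $r \models \sigma =_M \tau$ then $s \models f(\sigma) =_M f(\tau)$ and $s \models g(\sigma) =_M g(\tau)$ for $s$ an immediate successor of $r$, and the same with $\in_M$ in place of $=_M$. By the definition of satisfaction at a node, it suffices to produce, from an open $J \ni r$ with $J \Vdash \sigma =_M \tau$, open sets witnessing the forcing of the image statements at $s$.

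For the $f$-arrow the argument is essentially the one from the previous section: if $r \in J \Vdash \sigma =_M \tau$, then by elementarity of $f$ (applied inside $M_n$, noting that the forcing relation for primitive formulas is evaluated locally in $M_n$) we get $f(J) \Vdash f(\sigma) =_M f(\tau)$ in $M_{n+1}$; and $s \in f(J)$ because $s$ is infinitesimally close to $r$, so $f(J)$ (which contains the standard reals near $r$, hence the infinitesimal neighborhood of $r$) contains $s$. The only new wrinkle relative to Section 2 is the extra clause "for all $r' \in J$, $\sigma^{r'} = \tau^{r'}$" in the definition of $\Vdash$ for $=_M$ (and the analogous clause for $\in_M$); but this clause is again a statement internal to $M_n$, so elementarity carries it over verbatim, using $(f(\sigma))^{s} = f(\sigma^{s})$ as recorded in the excerpt.

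For the $g$-arrow, recall $g(\sigma) = f(\sigma)^s = (f(\sigma))^s$. The key observation is that $\sigma^r = \tau^r$ is exactly the side condition already guaranteed by $J \Vdash \sigma =_M \tau$ (taking the point $r$ itself, assuming $r \in J$, which we may since $r \in J$ by hypothesis). Since $\sigma^r$ and $\tau^r$ are ground-model sets and $(\sigma^r)^s = \sigma^r$, and $f$ of a ground-model set is again a ground-model name, we have $g(\sigma) = (f(\sigma))^s = f(\sigma^r)$ up to the relevant identifications — more precisely one checks that $g(\sigma)$ and $g(\tau)$ are literally equal as terms, or at least that $\mathbb{R} \Vdash g(\sigma) =_M g(\tau)$, because a settled-down term records no open-set data beyond $\mathbb{R}$ and its structure is determined by $\sigma^r$. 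So $g(\sigma) =_M g(\tau)$ holds at $s$ essentially because the settling-down operation collapses the witnessing data to something $J$ already certified at the point $r$. The $\in_M$ case is handled the same way, using the clause "$\langle \sigma^r, \mathbb{R}\rangle \in \tau^r$" from the definition of $J \Vdash \sigma \in_M \tau$.

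The main obstacle I anticipate is bookkeeping around the two places a real "$s$" can live: as a node of level $n+1$ (where $g$ is defined via $f(\sigma)^s$) versus as a point of $M_n$ used in forming $\sigma^s$ internally. One must be careful that the side conditions in the forcing definition are stated with points from the same model as the open sets, and that the identity $(f(\sigma))^s = f(\sigma^s)$ is invoked only when both $\sigma$ and $s$ come from $M_n$. A secondary subtlety is confirming that $s \in f(J)$: this needs that $f(J)$, being the elementary image of an open set, contains not just the standard part $r$ but the entire infinitesimal halo around it, which holds because "$J$ is open and $r \in J$" transfers to "$f(J)$ is open and contains an interval around $f(r)$", and $s$ lies in that interval. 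Once these identifications are pinned down, the verification for each clause is a short induction on terms exactly parallel to Lemma (the $=_M/\in_M$-homomorphism lemma) of Section 2, so I would state it as such and refer back rather than rewriting it in full.
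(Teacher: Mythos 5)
Your treatment of the $f$-arrow is correct and matches the paper: elementarity gives $f(J)\Vdash f(\sigma)=_M f(\tau)$, and $s\in f(J)$ because $s$ lies in the infinitesimal halo of $r$ and $f(J)$ is open around $f(r)$.

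The $g$-arrow is where you have a genuine gap. You evaluate the settling clause at the point $r$ (getting $\sigma^r=\tau^r$) and then try to bridge to $g(\sigma)=f(\sigma)^s$ via the identity $(f(\sigma))^s=f(\sigma^r)$. That identity is false in general: $f(\sigma^r)=(f(\sigma))^{f(r)}$, and $s\neq f(r)$. Although openness guarantees that any $f(J_i)$ containing $f(r)$ also contains $s$, the converse fails when $f(r)$ sits on the boundary of $f(J_i)$. Concretely, for $\sigma=\{\langle\hat{0},(0,1)\rangle\}$, $r=0$, and $s$ a positive infinitesimal, one has $\sigma^r=\emptyset$ while $f(\sigma)^s=\{\langle\hat{0},\mathbb{R}\rangle\}$. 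So ``the structure of $g(\sigma)$ is determined by $\sigma^r$'' is not a valid justification. The correct route --- and the one the paper takes --- is to apply the settling clause at the point $s$ rather than at $r$: having established $f(J)\Vdash f(\sigma)=_M f(\tau)$ with $s\in f(J)$, the second conjunct of the definition of forcing equality, read at the point $s\in f(J)$, gives $f(\sigma)^s=f(\tau)^s$ literally, i.e.\ $g(\sigma)=g(\tau)$ as terms; then $\mathbb{R}\Vdash\rho=_M\rho$ for every term $\rho$ yields $s\models g(\sigma)=_M g(\tau)$. Your hedged alternative conclusion ($g(\sigma)$ and $g(\tau)$ literally equal) is the right one, but it must be extracted from the forcing condition at $s$, not reconstructed from the data at $r$.
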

\begin{proof}
Note that the coherence of the transition functions is not an
issue for us. That is, normally one has to show that the
composition of the transition functions from nodes $p$ to $q$ and
from $q$ to $r$ is the transition function from $p$ to $r$.
However, in our case, the transition functions were given only for
immediate successors, and arbitrary compositions are allowed. So
there's nothing about coherence to prove.

If $r \models \sigma =_M \tau$ then let $r \in J \Vdash \sigma =_M
\tau$. For $s$ an immediate successor of $r$, $s$ is
infinitesimally close to $r$, so $s \in f(J)$. Also, by
elementarity, $f(J) \Vdash f(\sigma) =_M f(\tau)$. Therefore, $s
\models f(\sigma) =_M f(\tau)$. Regarding $g$, by the definition
of forcing equality, $f(\sigma)^s = f(\tau)^s$, that is,
$g(\sigma) = g(\tau$). It is easy to see that for any term $\rho
\; {\mathbb R} \Vdash \rho =_M \rho$, so $s \in {\mathbb R} \Vdash
g(\sigma) =_M g(\tau$), and $s \models g(\sigma) =_M g(\tau$).

Similarly for $\in_M$.
\end{proof}

\begin{lemma} \label{equalitylemma2}
This Kripke model satisfies the equality axioms:
\begin{enumerate}
\item $\forall x \; x=x$

\item $\forall x, y \; x=y \rightarrow y=x$

\item $\forall x, y, z \; x=y \wedge y=z \rightarrow x=z$

\item $\forall x, y, z \; x=y \wedge x \in z \rightarrow y \in z$

\item $\forall x, y, z \; x=y \wedge z \in x \rightarrow z \in y.$

\end{enumerate}
\end{lemma}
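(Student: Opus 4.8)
The statement to prove is Lemma~\ref{equalitylemma2}, namely that the second Kripke model (the one with settling-down terms) satisfies the five equality axioms. The plan is to follow exactly the structure of the proof of Lemma~\ref{equalitylemma} from Section~2, adapting each step to the new, two-clause definition of forcing. The key point is that $J \Vdash \sigma =_M \tau$ now asserts \emph{both} the old clause (``for all $\langle \sigma_i,J_i\rangle \in \sigma$, $J\cap J_i \Vdash \sigma_i \in_M \tau$, and vice versa'') \emph{and} the new clause (``for all $r \in J$, $\sigma^r = \tau^r$''); likewise for $\in_M$. So every inductive argument from the first model must be supplemented with a parallel, and much easier, verification of the $\sigma^r = \tau^r$ clause.

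First I would dispose of axioms 1 and 2. For reflexivity, a simultaneous induction shows that for all $J$ and $\sigma$, $J \Vdash \sigma =_M \sigma$ (and for all $\langle \sigma_i,J_i\rangle \in \sigma$, $J\cap J_i \Vdash \sigma_i \in_M \sigma$): the new clauses $\sigma^r=\sigma^r$ and $\langle\sigma^r,\mathbb R\rangle\in\sigma^r$ are immediate from the definition of $\sigma^r$ (when $r\in J_i$ for the membership clause, which is exactly when the inductive hypothesis is invoked). Axiom 2 is again trivial because the definition of $J\Vdash\sigma=_M\tau$ is manifestly symmetric in $\sigma,\tau$, including the new clause $\sigma^r=\tau^r$. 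Next, as in the earlier proof, I would establish the two helper lemmas: (a) shrinking $J$ preserves forcing, i.e.\ if $J'\subseteq J\Vdash\sigma=_M\tau$ then $J'\Vdash\sigma=_M\tau$ (and similarly for $\in_M$) --- here the new clause is immediate since ``for all $r\in J$'' implies ``for all $r\in J'$''; and (b) transitivity of forced equality at a fixed $J$: if $J\Vdash\rho=_M\sigma$ and $J\Vdash\sigma=_M\tau$ then $J\Vdash\rho=_M\tau$. For (b) the old-clause part is the same bookkeeping argument with nested open sets as before, and the new-clause part follows pointwise from transitivity of actual equality of ground-model sets: $\rho^r=\sigma^r$ and $\sigma^r=\tau^r$ give $\rho^r=\tau^r$. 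Axioms 3, 4, 5 then follow from (a) and (b) exactly as in Section~2: for 3, intersect the two witnessing neighborhoods and apply (a) then (b); for 4 and 5, combine a witnessing equality neighborhood with a witnessing membership neighborhood, shrink via (a), and convert via (b), taking care in the new clauses to use $\langle\sigma^r,\mathbb R\rangle\in\tau^r$ together with $\rho^r=\sigma^r$ (and $(\sigma^r)^s=\sigma^r$).

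The one genuinely new wrinkle --- and the step I expect to be the main obstacle --- is checking that the \emph{new} clause is well-behaved under the inductive combinations, in particular that it interacts correctly with $\in_M$ in axioms 4 and 5. Concretely, in axiom 4 one has $J\Vdash\rho=_M\sigma$ (so $\rho^r=\sigma^r$ for $r\in J$) and $K\Vdash\rho\in_M\tau$ (so $\langle\rho^r,\mathbb R\rangle\in\tau^r$ for $r\in K$), and one must produce $\langle\sigma^r,\mathbb R\rangle\in\tau^r$ for $r\in J\cap K$; this is immediate since $\rho^r=\sigma^r$. The subtlety is only that $\rho^r$, $\sigma^r$, $\tau^r$ are honest ground-model sets (elements of $M_n$), so these manipulations are just manipulations inside $M_n$ with classical logic available, and the identity $(\sigma^r)^s=\sigma^r$ noted after the definition of $\sigma^r$ guarantees that iterating settling-down causes no trouble. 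I would flag that the old-clause arguments go through verbatim because the recursion in the first clause of the new definition is identical to the recursion in the first model, and then state that the new-clause verifications are ``routine, using that $\sigma\mapsto\sigma^r$ is a well-defined operation on ground-model sets with $(\sigma^r)^s=\sigma^r$,'' leaving axiom~5 to the reader as before.
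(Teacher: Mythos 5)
Your proposal is correct and follows essentially the same route as the paper's own proof: the same two helper lemmas (monotonicity under shrinking $J$ and transitivity of forced equality on a fixed $J$), the same derivation of axioms 3--5 from them, and the same observation that each new ``settling-down'' clause reduces to a pointwise identity among the ground-model sets $\rho^r,\sigma^r,\tau^r$ (e.g.\ $\rho^r=\sigma^r$ and $\langle\rho^r,\mathbb{R}\rangle\in\tau^r$ yield $\langle\sigma^r,\mathbb{R}\rangle\in\tau^r$ for axiom 4). The only nit is notational: in the reflexivity clause the required membership is $\langle\sigma_i^r,\mathbb{R}\rangle\in\sigma^r$ for $\langle\sigma_i,J_i\rangle\in\sigma$ with $r\in J_i$, not $\langle\sigma^r,\mathbb{R}\rangle\in\sigma^r$.
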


\begin{proof}
Similar to the equality lemma from the previous section. For those
who are concerned that the new forcing relation might make a
difference and therefore want to see the details, here they come.

1: It is easy to show with a simultaneous induction that, for all
$J$ and $\sigma, J \Vdash \sigma =_{M} \sigma$, and, for all
$\langle \sigma_{i}, J_{i} \rangle \in \sigma, J \cap J_{i} \Vdash
\sigma_{i} \in_{M} \sigma$. Those parts of the definition of $=_M$
and $\in_M$ that are identical to those of the previous section
follow by the same inductive argument of the previous section. The
next clauses, in the current context, boil down to $\sigma^r =
\sigma^r$, which is trivially true, and, for $\langle \sigma_i,
J_i \rangle \in \sigma$ and $r \in J_i$, $\langle \sigma_i^r,
\mathbb R \rangle \in \tau^r$, which follows immediately from the
definition of $\tau^r$.

2: Trivial because the definition of $J \Vdash \sigma =_{M} \tau$
is itself symmetric.

3: For this and the subsequent parts, we need some lemmas.
\begin{lemma} If $J' \subseteq J \Vdash \sigma =_{M}
\tau$ then $J' \Vdash \sigma =_{M} \tau$, and similarly for
$\in_{M}$.
\end{lemma}
\begin{proof} By induction on $\sigma$ and $\tau$.
\end{proof}

\begin{lemma} If $J \Vdash \rho =_{M} \sigma$ and $J \Vdash
\sigma =_{M} \tau$ then $J \Vdash \rho =_{M} \tau$.
\end{lemma}
\begin{proof} The new part in the definition of $J \Vdash \rho =_{M}
\tau$ is that for all $r \in J \; \rho^r = \tau^r$. The
corresponding new parts of the hypotheses are that, for such $r$,
$\rho^r = \sigma^r$ and $\sigma^r = \tau^r$, from which the
desired conclusion follows immediately.

The old part of the definition follows, as before, by induction on
terms. Moreover, the proof is mostly identical. Starting with
$\langle \rho_{i}, J_{i} \rangle \in \rho$, we need to show that
$J \cap J_i \Vdash \rho_i \in \tau$. The construction of $\tau_k$
remains as above. What's new is the demand, by the additional
clause in the definition of forcing $\in$, that, for all $r \in J
\cap J_i, \; \langle \rho_i^r, \mathbb R \rangle \in \tau^r$. But
that's easy to see: $\langle \rho_i^r, \mathbb R \rangle \in
\rho^r$, by the definition of $\rho^r$, and, as we've already
seen, $\rho^r = \tau^r$.
\end{proof}
Returning to proving property 3, the hypothesis is that for some
$J$ and $K$ containing $r$, $J \Vdash \rho =_{M} \sigma$ and $K
\Vdash \sigma =_{M} \tau$. By the first lemma, $J \cap K \Vdash
\rho =_{M} \sigma, \sigma =_{M} \tau$, and so by the second, $J
\cap K \Vdash \rho =_{M} \tau$, which suffices.

4: Let $J \Vdash \rho =_{M} \sigma$ and $K \Vdash \rho \in_{M}
\tau$. We will show that $J \cap K \Vdash \sigma \in_{M} \tau$.
Let $r \in J \cap K$. By hypothesis, let $\langle \tau_{i}, J_{i}
\rangle \in \tau, J' \subseteq K$ be such that $r \in J' \cap
J_{i} \Vdash \rho =_{M} \tau_{i}$; without loss of generality $J'
\subseteq J$. By the first lemma, $J' \cap J_{i} \Vdash \rho =_{M}
\sigma$, and by the second, $J' \cap J_{i} \Vdash \sigma =_{M}
\tau_{i}$. Furthermore, $\rho^r = \sigma^r$ and $\langle \rho^r,
\mathbb R \rangle \in \tau^r$, hence $\langle \sigma^r, \mathbb R
\rangle \in \tau^r$.

5: Similar, and left to the reader.
\end{proof}

With this lemma in hand, we can now mod out of =$_M$ at each node,
and have a model in which equality is actually =.

\subsection{The Forcing Relation}
As above, we define a forcing relation $J \Vdash \phi$, with $J$
an open set of reals and $\phi$ a formula. The definition should
be read as pertaining to all formulas with parameters from a fixed
$M_n$, and is to be interpreted in said $M_n$.

\begin{definition}
For $\phi = \phi(\sigma_0, ... , \sigma_i)$ a formula with
parameters $\sigma_0, ... , \sigma_i$, $\phi^r$ is $\phi(\sigma_0^r,
... , \sigma_i^r)$.
\end{definition}

\begin{definition}
$J \Vdash \phi$ is defined inductively on $\phi$:

$J \Vdash \sigma = \tau$ iff for all $\langle \sigma_{i}, J_{i}
\rangle \in \sigma \; J \cap J_{i} \Vdash \sigma_{i} \in \tau$ and
for all $r \in J$ $\sigma^r = \tau^r$, and vice versa

$J \Vdash \sigma \in \tau$ iff for all $r \in J$ there is a
$\langle \tau_{i}, J_{i} \rangle \in \tau$ and $J' \subseteq$ J
such that r $\in J' \cap J_{i} \Vdash \sigma = \tau_{i}$ and for
all $r \in J \; \langle \sigma^r, {\mathbb R} \rangle \in \tau^r$

$J \Vdash \phi \wedge \psi$ iff $J \Vdash \phi$ and $J \Vdash
\psi$

$J \Vdash \phi \vee \psi$ iff for all $r \in J$ there is a $J'
\subseteq J$ containing $r$ such that $J \cap J' \Vdash \phi$ or
$J \cap J' \Vdash \psi$

$J \Vdash \phi \rightarrow \psi$ iff for all $J' \subseteq J$ if
$J' \Vdash \phi$ then $J' \Vdash \psi$, and, for all $r \in J$, if
${\mathbb R} \Vdash \phi^r$ then ${\mathbb R} \Vdash \psi^r$

$J \Vdash \exists x \; \phi(x)$ iff for all $r \in J$ there is a
$J'$ containing $r$ and a $\sigma$ such that $J \cap J' \Vdash
\phi(\sigma)$

$J \Vdash \forall x \; \phi(x)$ iff for all $r \in J$ and $\sigma$
there is a $J'$ containing $r$ such that $J \cap J' \Vdash
\phi(\sigma)$, and for all $r \in J$ and $\sigma$ there is a $J'$
containing $r$ such that $J' \Vdash \phi^r(\sigma)$.

\end{definition}
(Notice that in the last clause, $\sigma$ is not interpreted as
$\sigma^r$.)

\begin{lemma} \label{helpfullemma2}
\begin{enumerate}
\item For all $\phi \; \; \emptyset \Vdash \phi$.
\item If J' $\subseteq$ J $\Vdash \phi$ then J' $\Vdash \phi$.
\item If J$_{i}$ $\Vdash \phi$ for all $i$ then $\bigcup_{i} J_{i}
\Vdash \phi$.
\item J $\Vdash \phi$ iff for all r $\in$ J there is a J'
containing r such that J $\cap$ J' $\Vdash \phi$.
\item For all $\phi$, J if J $\Vdash \phi$ then for all r $\in J
\; {\mathbb R} \Vdash \phi^r$.
\item If $\phi$ contains only ground model terms, then either
${\mathbb R} \Vdash \phi$ or $\mathbb{R} \Vdash \neg \phi$.
\end{enumerate}
\end{lemma}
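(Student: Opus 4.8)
The plan is to prove the six parts in order: parts 1--4 by a simultaneous induction on $\phi$ that re-runs the proof of Lemma~\ref{helpful lemma} from Section~2, and parts 5--6 by a further induction tracking the new features of the present forcing relation. For parts 1--4 the only thing to add to the Section~2 argument is the observation that every new conjunct in the current definition of $\Vdash$ --- the ``for all $r\in J$, $\sigma^r=\tau^r$'' and ``$\langle\sigma^r,{\mathbb R}\rangle\in\tau^r$'' conjuncts in the atomic clauses, and the ``for all $r\in J$, ${\mathbb R}\Vdash\phi^r\Rightarrow{\mathbb R}\Vdash\psi^r$'' conjunct for $\to$ (likewise the extra conjunct for $\forall$) --- has the form ``for all $r\in J$, $\ldots$'', hence is inherited when $J$ shrinks (part 2) and holds for $\bigcup_i J_i$ as soon as it holds for each $J_i$, since any point of the union lies in some $J_i$ (part 3; the $\to$ case additionally uses part 2 and the induction hypothesis, as in Section~2). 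Parts 1 and 4 then follow exactly as before, with $\bot$ again read as ``$0=1$'', for which $\emptyset\Vdash\bot$.

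Part 5 is the genuinely new clause. I would prove it by induction on $\phi$, using throughout the idempotence $(\rho^s)^t=\rho^s$ noted after the definition of $\sigma^r$, together with the fact from Lemma~\ref{equalitylemma2}(1) that ${\mathbb R}\Vdash\rho =_M\rho$ for every term $\rho$. In the atomic cases, ${\mathbb R}\Vdash(\sigma=\tau)^r$ and ${\mathbb R}\Vdash(\sigma\in\tau)^r$ unwind --- using $(\sigma^r)^s=\sigma^r$, $(\tau^r)^s=\tau^r$ and the explicit shape of $\tau^r$ --- to precisely the conjuncts ``$\sigma^r=\tau^r$'', resp.\ ``$\langle\sigma^r,{\mathbb R}\rangle\in\tau^r$'', that $J\Vdash\sigma=\tau$, resp.\ $J\Vdash\sigma\in\tau$, already guarantee. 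The $\wedge$ and $\vee$ cases are read off the definitions and the induction hypothesis. For $\to$: since $(\phi^r)^s=\phi^r$, the second conjunct of ${\mathbb R}\Vdash\phi^r\to\psi^r$ is literally the second conjunct of $J\Vdash\phi\to\psi$ at $r$, while the first follows from the induction hypothesis for $\phi$ (which upgrades any nonempty $J'\Vdash\phi^r$ to ${\mathbb R}\Vdash\phi^r$) together with parts 1 and 2. For $\exists$ the argument is direct: given a witness $\tau$ for $J\Vdash\exists x\,\phi(x)$, the induction hypothesis applied at a point $r\in J$ yields ${\mathbb R}\Vdash\phi(\tau^r)$, and $\tau^r$ then witnesses ${\mathbb R}\Vdash(\exists x\,\phi(x))^r$, taking ${\mathbb R}$ itself as the ambient open set.

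The main obstacle is the $\forall$ case of part 5. There one needs ${\mathbb R}\Vdash(\forall x\,\phi(x))^r$, i.e.\ ${\mathbb R}\Vdash\forall x\,\chi(x)$ for $\chi(x)=\phi(x,\vec\sigma^{\,r})$ a formula all of whose parameters now come from the ground model; by the definition of $\Vdash\forall$ (using $\chi^s=\chi$) and part 4 this is equivalent to ``${\mathbb R}\Vdash\chi(\tau)$ for every term $\tau$''. The hypothesis together with the induction hypothesis of part 5 only delivers ``${\mathbb R}\Vdash\chi(\hat a)$ for every canonical name $\hat a$'', since the restriction of an arbitrary term is (a term for) a ground-model set. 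Bridging this gap --- from all canonical-name instances to all term instances --- is exactly where the ``settling down'' technique of \cite{RSL1} is needed, and I would do it by carrying along, in the same induction, the auxiliary two-valuedness principle that for any formula $\psi$ all of whose parameters denote ground-model sets, ${\mathbb R}\Vdash\psi$ holds iff $\psi$ is true in the classical model $M_n$ under the evident interpretation ($\hat a\mapsto a$, with $=$ and $\in$ read literally); the $\forall$-instance of this principle converts a universal quantifier over canonical names into $M_n$-truth of a genuine $\forall$-statement and back. Given this, part 6 is the easy residue: when $\phi$ has only ground-model terms one has $\phi^r=\phi$, so part 5 gives the dichotomy that either ${\mathbb R}\Vdash\phi$ or no nonempty open set forces $\phi$, and in the latter case checking the two conjuncts of ${\mathbb R}\Vdash(\phi\to\bot)$ --- using part 1 and ${\mathbb R}\not\Vdash\bot$ --- yields ${\mathbb R}\Vdash\neg\phi$; alternatively part 6 drops straight out of the two-valuedness principle together with classical logic in $M_n$ and $(\neg\phi)^\ast=\neg(\phi^\ast)$.
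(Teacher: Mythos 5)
Parts 1--4, part 6, and the $=$, $\in$, $\wedge$, $\vee$, $\rightarrow$, $\exists$ cases of part 5 in your proposal match the paper's argument. The gap is in the $\forall$ case of part 5, and the bridge you propose does not hold. Your ``two-valuedness principle'' --- that for $\psi$ with only ground-model parameters, ${\mathbb R}\Vdash\psi$ iff $\psi$ holds classically in $M_n$ under $\hat a\mapsto a$ --- is false in this model. Take $\psi$ to be ``every subset of $\hat 1$ equals $\hat 0$ or $\hat 1$'': this is true in $M_n$, but the paper's closing discussion exhibits, at each node $r$, the term $\{\langle\emptyset,J\rangle\mid \min J>r\}$, a subset of $1$ which is neither $0$ nor $1$ there; so no neighborhood forces $\psi$, and in fact ${\mathbb R}\Vdash\neg\psi$. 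Part 6 only asserts that ground-model-parameter formulas are \emph{decided} by ${\mathbb R}$, not that they are decided the same way as in $M_n$, so no such absoluteness is available to convert ``all canonical-name instances'' into ``all term instances.''

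What actually closes the $\forall$ case is something you did not use: the \emph{second} clause in the definition of $J\Vdash\forall x\,\phi(x)$, namely that for all $r\in J$ and all terms $\sigma$ there is a $J'$ containing $r$ with $J'\Vdash\phi^r(\sigma)$ --- where, as the parenthetical remark after the definition stresses, $\sigma$ is \emph{not} replaced by $\sigma^r$. This hands you, for every term $\sigma$ (not just canonical names), a neighborhood of $r$ forcing $\phi^r(\sigma)$. The remaining problem is that you need such a neighborhood around an arbitrary $s\in{\mathbb R}$, not just around $r$; the paper gets this by a translation argument: apply the clause to $\mathrm{shift}_{r-s}\sigma$ (all intervals in $\sigma$ shifted hereditarily by $r-s$) to get $J''\ni r$ forcing $\phi^r(\mathrm{shift}_{r-s}\sigma)$, then shift the whole configuration by $s-r$. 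Since $\phi^r$ has only ground-model parameters it is shift-invariant, the image of $\mathrm{shift}_{r-s}\sigma$ is $\sigma$, and the forcing relation is preserved, yielding $s\in J'\Vdash\phi^r(\sigma)$; part 4 then gives ${\mathbb R}\Vdash\phi^r(\sigma)$ for every $\sigma$, as required. Without the un-settled second clause and the shift, the case does not go through, and it is precisely to make this step work that the $\forall$-clause of the forcing definition was formulated as it is.
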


\begin{proof}

1. Trivial induction, as before.

2. Again, a trivial induction.

3. By induction. As in the previous section, for the case of $\rightarrow$,
you need to invoke the previous part of this lemma. All other
cases are straightforward.

4. Trivial, using 3.

5. By induction on $\phi$.

Base cases: = and $\in$: Trivial from the definitions of forcing =
and $\in$.

$\vee$ and $\wedge$: Trivial induction.

$\rightarrow$: Suppose $J \Vdash \phi \rightarrow \psi$ and $r \in
J$. We must show that ${\mathbb R} \Vdash \phi^r \rightarrow
\psi^r$. For the first clause, suppose $K \subseteq {\mathbb R}$
and $K \Vdash \phi^r$. If $K = \emptyset$ then $K \Vdash \psi^r$.
Else let $s \in K$. Inductively, since $s \in K \Vdash \phi^r$,
${\mathbb R} \Vdash (\phi^r)^s$. But $(\phi^r)^s = \phi^r$, so
${\mathbb R} \Vdash \phi^r$. Using the hypothesis on $J$,
${\mathbb R} \Vdash \psi^r$, and so by part 2 above, $K \Vdash
\psi^r$. For the second clause, let $s \in {\mathbb R}$. If
${\mathbb R} \Vdash (\phi^r)^s$ then ${\mathbb R} \Vdash \phi^r$.
By the hypothesis on $J$, ${\mathbb R} \Vdash \psi^r$, and $\psi^r
= (\psi^r)^s$.

$\exists$: If $J \Vdash \exists x \; \phi(x)$ and $r \in J$, let
$J'$ and $\sigma$ be such that $r \in J \cap J' \Vdash
\phi(\sigma)$. By induction, ${\mathbb R} \Vdash
\phi^r(\sigma^r)$. $\sigma^r$ witnesses that ${\mathbb R} \Vdash
\exists x \; \phi^r(x).$

$\forall$: Let $J \Vdash \forall x \; \phi(x)$ and $r \in J$. We
need to show that ${\mathbb R} \Vdash \forall x \; \phi^r(x)$. For
the first clause, we will show that for any $\sigma$, ${\mathbb R}
\Vdash \phi^r(\sigma)$. By part 4 above, it suffices to let $s \in
{\mathbb R}$ be arbitrary, and find a $J'$ containing $s$ such
that $J' \Vdash \phi^r(\sigma)$. By the hypothesis on $J$, for
every $\tau$ there is a $J''$ containing $r$ such that $J'' \Vdash
\phi^r(\tau)$. Introducing new notation here, let $\tau$ be
$shift_{r-s}\sigma$, which is $\sigma$ with all the intervals
shifted by $r-s$ hereditarily. So we have $r \in J'' \Vdash
\phi^r(shift_{r-s}\sigma)$. Now shift by $s-r$. Letting $J'$ be
the image of $J''$, note that $s \in J'$, the image of
$shift_{r-s}\sigma$ is just $\sigma$, and the image of $\phi^r$ is
just $\phi^r$. Since the forcing relation is unaffected by this
shift, we have $s \in J' \Vdash \phi^r(\sigma)$, as desired.

The second clause follows by the same argument. Given any $s \in
{\mathbb R}$ and $\sigma$, we need to show that there is a $J'$
containing $s$ such that $J' \Vdash (\phi^r)^s(\sigma)$. But
$(\phi^r)^s = \phi^r$, and we have already shown that for all
$\sigma \; {\mathbb R} \Vdash \phi^r(\sigma)$.

6. If $\mathbb{R} \not\Vdash \phi$, then we have to show that
$\mathbb{R} \Vdash \neg \phi$, that is $\mathbb{R} \Vdash \phi
\rightarrow \bot.$ Since $\phi^r = \phi$, the second clause in
forcing an implication is exactly the case hypothesis. The first
clause is that for all non-empty $J \subseteq \mathbb{R} \; J \not
\Vdash \phi.$ If, to the contrary, $J \Vdash \phi$ for some
non-empty $J$, then, letting $r \in J$, by the previous part of
this lemma, $\mathbb{R} \Vdash \phi^r$; since $\phi^r = \phi$,
this contradicts the case hypothesis.
\end{proof}

\begin{lemma} Truth Lemma: For any node r, r $\models \phi$
iff J $\Vdash \phi$ for some J containing r.
\end{lemma}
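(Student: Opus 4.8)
The plan is to prove the Truth Lemma by induction on $\phi$, following the template of the Truth Lemma from the previous section, but with extra care for the new second clauses in the definitions of forcing $\rightarrow$ and $\forall$, which are precisely what keeps track of the ``settling down'' behavior. As in the earlier proof, the right-to-left direction (``forced implies true at $r$'') will be the easy direction, handled by a uniform induction that essentially unwinds the definition of $r \models \phi$ together with Lemma~\ref{helpfullemma2}, parts 2 and 4; in fact only the $\rightarrow$ case of right-to-left will need the left-to-right direction of the inductive hypothesis, exactly as before. So the substance is the left-to-right direction (``true at $r$ implies forced by some neighborhood of $r$''), which I would do case by case.

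First I would dispatch the base cases $=$ and $\in$: these are immediate, since $r \models \sigma = \tau$ and $r \models \sigma \in \tau$ were \emph{defined} to mean that some $J \ni r$ forces the corresponding primitive statement. The cases $\wedge$, $\vee$, and $\exists$ go through verbatim as in the previous section: combine the witnessing open sets for the conjuncts (intersection for $\wedge$), or just pass along the single witness (for $\vee$ and $\exists$), using Lemma~\ref{helpfullemma2}(2) to shrink as needed. The two genuinely delicate cases are $\rightarrow$ and $\forall$, because forcing them now has a conjunct about $\mathbb{R} \Vdash \phi^r$.

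For $\rightarrow$: suppose $r \models \phi \rightarrow \psi$ but no neighborhood of $r$ forces it. Work in an infinitesimal neighborhood $J$ of $r$. Failure of $J \Vdash \phi \rightarrow \psi$ can occur via either clause. If the first clause fails, there is $J' \subseteq J$ with $J' \Vdash \phi$ but $J' \not\Vdash \psi$; then by Lemma~\ref{helpfullemma2}(4) some $r' \in J'$ has no neighborhood forcing $\psi$, so inductively $r' \not\models \psi$ while $r' \models \phi$, and since $r'$ extends $r$ this contradicts $r \models \phi \rightarrow \psi$ --- just as in the previous section. The new possibility is that the \emph{second} clause fails: for some $s \in J$, $\mathbb{R} \Vdash \phi^s$ but $\mathbb{R} \not\Vdash \psi^s$. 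Here I would pass to the node obtained from $r$ by the settling-down transition $g$ to (a node infinitesimally close to) $s$: at that node the model element named by any $\sigma$ is $\sigma^s$, so $\phi^s$ holds there but $\psi^s$ does not (using the easy forced-implies-true direction for $\phi^s$ and the inductive true-implies-forced contrapositive for $\psi^s$, together with the idempotence $(\sigma^s)^t = \sigma^s$). Since this node is also an extension of $r$, we again contradict $r \models \phi \rightarrow \psi$. The $\forall$ case is analogous: if no neighborhood of $r$ forces $\forall x\, \phi(x)$, then in an infinitesimal $J$ around $r$ either the first clause fails --- yielding $r' \in J$ and $\sigma$ with no neighborhood of $r'$ forcing $\phi(\sigma)$, hence $r' \not\models \phi(\sigma)$ by induction, contradicting $r \models \forall x\,\phi(x)$, exactly as before --- or the second clause fails, yielding $r \in J$ and $\sigma$ such that no $J' \ni r$ has $J' \Vdash \phi^r(\sigma)$; then passing to the settled node over $r$ and invoking the inductive hypothesis for $\phi^r(\sigma)$ gives the contradiction.

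**The main obstacle** I anticipate is bookkeeping around the settling-down functions in the $\rightarrow$ and $\forall$ cases: one must check that passing through the $g$-transition to a node sitting over the real $s$ genuinely realizes $\phi^s$ as the truth of $\phi$ at that node, which requires the observation (already recorded in the text) that $(f(\sigma))^s = f(\sigma^s)$ and that $(\sigma^s)^t = \sigma^s$, so that the interpretation stabilizes and matches what Lemma~\ref{helpfullemma2}(5) guarantees about $\mathbb{R} \Vdash \phi^s$. Once that correspondence is pinned down, the contradictions fall out just as in the previous section's Truth Lemma; the rest is routine induction.
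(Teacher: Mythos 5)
Your proposal is correct and is essentially the paper's proof: induction on $\phi$, with the new second clauses of $\rightarrow$ and $\forall$ handled by passing to the settled $g$-successor at $s$ and combining the inductive hypothesis with Lemma~\ref{helpfullemma2}(5) and the idempotence $(\sigma^s)^t=\sigma^s$; your two-case contradiction for $\rightarrow$ is a harmless reorganization of the paper's argument, which instead first secures the second clause on a \emph{standard} interval by an overspill step and only then refutes failure of the first clause on an infinitesimal subneighborhood. The one point you treat too lightly is the ``easy'' right-to-left direction for $\rightarrow$: persistence of $\phi\rightarrow\psi$ to $g$-successors is not ``exactly as before'' and is not handled by parts 2 and 4 alone, but requires precisely the second clause of the forcing definition together with part 5 (from $s\models g(\phi)$ infer $\mathbb{R}\Vdash f(\phi)^s$, hence $\mathbb{R}\Vdash f(\psi)^s$, hence $s\models g(\psi)$) --- the same machinery you correctly deploy in the hard direction.
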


\begin{proof}
By induction on $\phi$, in detail.

=: Trivial, by the definition of =$_M$.

$\in$: Trivial, by the definition of $\in_M$.

$\wedge$: Trivial.

$\vee$: Trivial.

$\rightarrow$:
First we do the left-to-right direction (``if true at a node then
forced").

Suppose that for node $r$ of tree height $n$, $r \models \phi
\rightarrow \psi$. Note that for $s \in M_{n+1}$ infinitesimally
close to $r$, if ${\mathbb R} \Vdash f(\phi)^s$ then (inductively)
$f(\phi)^s$ holds at any successor node to $r$, in particular the
one labeled $s$. Since $r \models \phi \rightarrow \psi$, and at
node $s \; g(\phi) = f(\phi)^s$ and $g(\psi) = f(\psi)^s$,
$f(\psi)^s$ would also hold at $s$. Inductively $f(\psi)^s$ would
be forced by some (non-empty) open set. Choosing any $t$ from that
open set, by part 5 of this lemma, ${\mathbb R} \Vdash
(f(\psi)^s)^t$. Also $(f(\psi)^s)^t = f(\psi)^s$. So ${\mathbb R}
\Vdash f(\phi)^s$ implies ${\mathbb R} \Vdash f(\psi)^s$ for all
$s$ infinitely close to $r$. Hence by overspill the same must hold
for all $s$ in some finite interval $J$ containing $r$, and the
corresponding assertion in $M_n$: for all $s \in J$ if ${\mathbb
R} \Vdash \phi^s$ then ${\mathbb R} \Vdash \psi^s$. Note that the
same holds also for all subsets of $J$.

Suppose for a contradiction that no subset of $J$ containing $r$
forces $\phi \rightarrow \psi$. In $M_{n+1}$ let $J'$ be an
infinitesimal neighborhood around $r$. So $J' \not \Vdash f(\phi)
\rightarrow f(\psi)$. Since $J' \subseteq J$, the second clause in
the definition of $J' \Vdash f(\phi) \rightarrow f(\psi)$ is
satisfied. Hence the first clause is violated. Let $J'' \subseteq
J'$ be such that $J'' \Vdash f(\phi)$, but $J'' \not \Vdash
f(\psi)$. By part 4 of this lemma and the inductive hypothesis,
let $s \in J''$ be such that $s \not \models f(\psi)$. But $s
\models f(\phi)$. So $s \not \models f(\phi) \rightarrow f(\psi)$.
This contradicts $r \models \phi \rightarrow \psi$.

For the right-to-left direction (``if forced then true"), suppose
$r \in J \Vdash \phi \rightarrow \psi$. If $r \models \phi$, then
inductively let $r \in J' \Vdash \phi$. So $r \models \psi$, which
persists at all future nodes. Hence $r \models \phi \rightarrow
\psi$. The same argument applies unchanged to any extension of $r$
reached via a composition of only the $f$-style transition
functions. The other cases are compositions which include at least
one $g$; without loss of generality we can assume we're using $g$
to go from $r$ to an immediate extension $s$. If $s  \models
g(\phi)$, i.e. $s \models f(\phi)^s$, then by induction and by
part 5 ${\mathbb R} \Vdash f(\phi)^s$. Also, by elementarity $s
\in J \Vdash f(\phi) \rightarrow f(\psi)$. Hence, by the
definition of forcing $\rightarrow, {\mathbb R} \Vdash f(\psi)^s$,
so $s \models f(\psi)^s$, i.e. $s \models g(\psi)$.

$\exists$: If $r \models \exists x \; \phi(x)$, then let $\sigma$
be such that $r \models \phi(\sigma)$. Inductively there is a $J$
containing $r$ such that $J \Vdash \phi(\sigma)$. $J$ suffices. In
the other direction, if $r \in J \Vdash \exists x \; \phi(x)$, let
$J'$ and $\sigma$ be such that $r \in J \cap J' \Vdash
\phi(\sigma)$. Inductively $r \models \phi(\sigma)$, so $r \models
\exists x \; \phi(x)$.

$\forall$: For the left-to-right direction, suppose at node $r$
that $r \models \forall x \; \phi(x)$. If there were no interval
$J$ forcing the first clause in the $\forall$-forcing definition,
then let $J$ be an infinitesimal neighborhood around $r$. Let $s
\in J$ and $\sigma$ be such that there is no $J'$ containing $s$
such that $J' \Vdash f(\phi)(\sigma)$. Inductively $s \not \models
f(\phi)(\sigma)$, which is a contradiction.

If there were no interval $J$ forcing the second clause in the
$\forall$-forcing definition, then let $J$ be an infinitesimal
neighborhood around $r$. Let $s \in J$ and $\sigma$ be such that
there is no $J'$ containing $s$ such that $J' \Vdash
f(\phi)^s(\sigma)$. Inductively $s \not \models
f(\phi)^s(\sigma)$, i.e. $s \not \models g(\phi)(\sigma)$, which
is a contradiction.

The right-to-left direction is trivial.

\end{proof}

\subsection{The Final Proof}
It remains to show only
\begin{theorem} $M$ $\models$ CZF$_{Exp}$
\end{theorem}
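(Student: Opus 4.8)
The plan is to check the axioms of CZF$_{Exp}$ one at a time at an arbitrary node $r$ of level $n$, carrying out the work inside $M_{n}$ and transferring statements to successor nodes by elementarity, exactly in the spirit of Section~2. The logical rules hold because $M$ is a bona fide Kripke model, and the equality axioms are Lemma~\ref{equalitylemma2}. Pairing, Union and Infinity are witnessed by the very terms used for IZF$_{Ref}$ in Section~2 — $\{\langle\sigma,{\mathbb R}\rangle,\langle\tau,{\mathbb R}\rangle\}$, the union term, $\hat{\omega}$ — now decorated with suitable real-indexed (``settling-down'') components so that the new clauses in the definition of forcing $=$ and $\in$ (the ones demanding $\sigma^{r}=\tau^{r}$, resp.\ $\langle\sigma^{r},{\mathbb R}\rangle\in\tau^{r}$) are met; since these witness terms are either canonical names or are built from the components of the given terms by finite intersection, computing their settlings is mechanical. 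Extensionality and Set Induction are run as in Section~2, the two new ingredients being Lemma~\ref{helpfullemma2}(5), which pushes a forced formula down to the fully settled level (if $J\Vdash\phi$ then ${\mathbb R}\Vdash\phi^{r}$ for $r\in J$), and Lemma~\ref{helpfullemma2}(6), which decides statements about ground-model (equivalently, already settled) terms outright; for Extensionality one also needs the triviality that for hereditarily-${\mathbb R}$ terms $a$ and $b$, ${\mathbb R}\Vdash\forall z\,(z\in a\leftrightarrow z\in b)$ implies $a=b$.

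Bounded Separation — indeed full Separation — is handled by a term of the same shape as the one in Section~2, namely $\{\langle\sigma_{i},J\cap J_{i}\rangle\mid\langle\sigma_{i},J_{i}\rangle\in\sigma,\ J\Vdash\phi(\sigma_{i})\}$, blended with real-indexed components $\langle\sigma_{i}^{r},r\rangle$ included whenever $r$ lies in the relevant open set and ${\mathbb R}\Vdash\phi^{r}(\sigma_{i}^{r})$; one then verifies the defining biconditional at original nodes, at settled nodes, and at mixed nodes, using parts~5 and~6 of Lemma~\ref{helpfullemma2} together with the Truth Lemma. The observation that makes the original-node check succeed is that forcing ``$\sigma_{i}$ is in the term'' on a neighborhood of $r$ forces, through the first clause of the $\in$-definition, a match with one of the genuinely-$\phi$-satisfying open components, whence $r\models\phi(\sigma_{i})$ by the equality axioms. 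Strong Collection is then obtained by the Reflection-style argument already used in Section~2: given $r\models\forall x\in\sigma\,\exists y\,\phi(x,y)$, pick inside $M_{n}$ a set $X$ containing $\sigma$ and the parameters with $X\prec_{k}M_{n}$ for $k$ large enough to capture the relevant truth predicate, form $\tau_{0}=\{\langle\rho,{\mathbb R}\rangle\mid\rho\in X\text{ a term}\}$, and prune with Separation to $\tau=\{y\in\tau_{0}\mid\exists x\in\sigma\,\phi(x,y)\}$; that $\tau$ still collects witnesses and contains no junk after the transitions $f$ and $g$ follows because settling lands one in a ground-model-like world in which the elementarity of $X$ remains available.

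The substantive axiom is Exponentiation, and this is where the techniques of \cite{RSL1} are essential. For terms $\sigma,\tau$ I must produce a single term $E$ with $M\models E=\hoch{\sigma}{\tau}$. The heart is a tameness lemma: if $M$ forces that $\varphi$ is a function from $\sigma$ to $\tau$, then for any immediate successor $s$ reached via the settling function $g$, the term $g(\varphi)=f(\varphi)^{s}$ is (the image of) a genuine ground-model function from $g(\sigma)$ to $g(\tau)$, and — more to the point — the value $\varphi$ assigns to each element of $\sigma$ is, on a neighborhood, pinned down by a ground-model element of $\tau$. This is precisely the \cite{RSL1} mechanism: the troublesome subterms built to make ${\mathbb R}^{d}$ proliferate contribute nothing to the graph of a function, because the very first $g$-transition settles them to a fixed standard real and every $g$-transition does the same, so they cannot sustain non-ground-model behaviour inside a function. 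Granting tameness, the functions $\sigma\to\tau$ are, modulo $=_{M}$ and locally, coded by data lying in the sets $M_{n}$ — here one uses that $\sigma$ and $\tau$ are sets, so $\hoch{\sigma^{s}}{\tau^{s}}$ and the relevant spaces of codes exist in each $M_{k}$ by Power Set there — and $E$ is assembled as $\{\langle\varphi_{j},J_{j}\rangle\mid j\in I\}\cup\{\langle\check{G},s\rangle\mid s\in{\mathbb R},\ G\text{ a ground-model function }\sigma^{s}\to\tau^{s}\}$, with $I\in M_{n}$ an index for the tame functions and $\check{G}$ the canonical name of $G$; the open-indexed part supplies the functions at unsettled nodes, and the real-indexed part guarantees that $E^{s}=\hoch{\sigma^{s}}{\tau^{s}}$ at every settled node. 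The main obstacle is exactly the tameness lemma together with the bookkeeping needed to confirm that this one term $E$ behaves correctly at original, settled, and mixed nodes at once — the double transition structure makes ``future node'' genuinely richer than in Section~2 — and this is just what the machinery of \cite{RSL1} was designed to carry; the term structure of the present section (the real-indexed components and the $(\cdot)^{r}$ operation) was reverse-engineered for this purpose. It should be stressed that none of this yields Subset Collection: the tameness that lets us enumerate \emph{functions} as a set fails for \emph{multi-valued} relations, which is what will force ${\mathbb R}^{d}$ to be a proper class.
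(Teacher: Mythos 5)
Your outline matches the paper's proof in most respects — the same style of two-part terms (open-set-indexed components for the ``live'' nodes, real-indexed components for the settled ones) for Union, Separation and Exponentiation, the same reliance on parts 5 and 6 of Lemma~\ref{helpfullemma2} for Extensionality and Set Induction — but it has a genuine gap at exactly the point you yourself identify as the heart of the matter. Your ``tameness lemma'' is asserted, not proved, and the mechanism you offer for it (that the settling transition $g$ erases the troublesome subterms, as in \cite{RSL1}) is not what actually makes it true. What must be shown is: if some neighborhood of $r$ forces $\rho:\hat{X}\rightarrow\hat{Y}$ to be a function for \emph{ground-model} sets $X$ and $Y$ (this is the situation at a settled node, since $\sigma^{s}$ and $\tau^{s}$ are ground-model), then a neighborhood of $r$ forces $\rho$ to equal $\hat{h}$ for a single ground-model function $h$. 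The proof is topological, not a consequence of settling: for each $x\in X$ the value $\rho(\hat{x})$ is, near each point of the forcing interval $J$, pinned to a fixed $\hat{y}$ (an overspill argument from the immediate successors), so the value is locally constant on $J$; since $J$ is connected the value is globally constant, and $\rho$ is forced on $J$ to be a particular ground-model function. Without this argument the second, real-indexed clause of your term $E$ is not known to capture every function at a settled node. Note also that your tameness claim as stated for arbitrary $\sigma,\tau$ — that values are pinned down by ground-model elements of $\tau$ — is too strong: a function into a non-ground-model target can take non-ground-model values; the lemma is only needed, and only true, for ground-model domain and codomain.

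A secondary concern is Strong Collection. You transplant the elementary-submodel (Reflection-style) argument from Section~2, but the paper explicitly shows that Reflection \emph{fails} in this model, precisely because of settling, so this transplant needs justification rather than analogy. The specific difficulty is that the settled instances $\phi^{s}$ must be witnessed for every real $s\in J$ and every $x\in\sigma^{s}$, and elementarity of $X\prec_{k}M_{n}$ only supplies witnesses inside $X$ for parameters that lie in $X$; a single elementary submodel chosen to contain ``$\sigma$ and the parameters'' does not obviously contain witnesses for uncountably many settled instances. The paper avoids this by collecting, for each $s\in J$ and each $x$ with $\langle x,\mathbb{R}\rangle\in\sigma^{s}$, a witness $\tau_{x,s}$ and putting $\langle\tau_{x,s},s\rangle$ directly into the collecting term (using Collection in $M_{n}$ to gather these), alongside the $\langle\tau_{i,s},J_{i,s}\rangle$ for the unsettled clause. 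Your route could likely be repaired by closing $X$ appropriately, but as written it glosses over the one feature — the second, settled clause in the forcing of $\forall$ — that distinguishes this model from the one in Section~2.
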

and
\begin{theorem} $M$ $\models$ The Dedekind reals do not form a set.
\end{theorem}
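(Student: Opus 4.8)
The goal is to show $M \models$ ``the Dedekind reals do not form a set'', which I would prove by contradiction: suppose at some node $r$ of level $n$ there is a term $D$ with $r \models$ ``$D$ is the set of all Dedekind reals''. By the Truth Lemma, fix an open $J \ni r$ forcing this. The strategy is to produce, at some extension of $r$, a Dedekind real that provably cannot be an element of $D$, by exploiting exactly the ``disappearing'' phenomenon that the term structure was built to support — in the same spirit as \cite{RSL1}, but with the troublesome object being (a variant of) the generic cut $G$ rather than a relation on $\mathbb{N}$.

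\textbf{Key steps.} First I would isolate, as in the previous section, the generic Dedekind cut: at each node the term $G = \{\langle \hat q, K\rangle \mid q \in \mathbb{Q},\ K$ an open interval, $q < K\}$ is forced to be a located left cut (the proof of the earlier Proposition goes through verbatim, since the new clauses in the forcing of $=$ and $\in$ involving $\sigma^s$ are automatically satisfied for ground-model-indexed pieces). The crucial new point is the \emph{second} component of terms: I would build a term $\tilde{G}$ that behaves like $G$ under the $f$-transition but, under a $g$-transition to a node $s$, settles down — $\tilde G^s$ becomes (the canonical name of) a fixed standard real, namely the standard part determined by $s$. Concretely $\tilde G$ should include pairs $\langle \hat q, s\rangle$ engineered so that $\tilde{G}^s$ is the canonical Dedekind cut of the real $s$. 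Then at node $r$, suppose $r \models \tilde{G} \in D$; by the Truth Lemma some $J' \ni r$ forces it, so in particular (clause for $\in$) for every $t \in J'$ we need $\langle \tilde G^t, \mathbb{R}\rangle \in D^t$. Now pass to an immediate successor $s$ of $r$ via the $g$-transition. At $s$, $g(\tilde G) = f(\tilde G)^s = \tilde{G}^s$ is a genuine \emph{standard} Dedekind real — a ground-model real — and one checks $s \models g(\tilde G) \in D$. That is fine so far; the contradiction must come from the fact that $D$ was supposed to contain \emph{every} Dedekind real at every node, including ones that have not yet settled. So I would instead run the argument the other way: show that the would-be set $D$ cannot contain all the ``pre-settling'' generic cuts $\tilde G$ produced at successive levels, because any single term $D$ at level $n$, when pushed forward by $f$ and then examined at a node where a \emph{new} generic cut $\tilde G'$ appears (one not among the cuts $D$ anticipated, by a cardinality/elementarity count of how many terms live below a given $V$-rank in $M_n$), fails $\tilde G' \in D$. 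This is precisely the ``then some other real just like $G$ will appear and pull the same stunt'' remark in the construction.

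\textbf{The main obstacle.} The delicate part is making the diagonalization against $D$ precise while respecting the double-transition structure and the Truth Lemma. The forcing clause for $\forall$ has \emph{two} conjuncts — an ``$f$-style'' clause and a ``$g$-style'' (settled, $\phi^r$) clause — and a term claiming to be the set of Dedekind reals must satisfy ``$\forall x\,(x$ a Dedekind real $\to x \in D)$'' in this strong sense. The real work is to show these two clauses are jointly unsatisfiable: the $g$-clause forces $D^r$ (a ground model set, of some fixed $V$-rank bound $\alpha$ in $M_n$) to contain the canonical name of \emph{every} standard real, which is fine; but the $f$-clause then forces, at genuine (non-settled) extensions, that $D$ contains each freshly-introduced generic cut $\tilde G'$, and such a $\tilde G'$ can be built at a high enough level with $V$-rank exceeding $\alpha$ and with $\tilde G'^t$ differing from every element listed in $D^t$ on the placement of infinitesimally-near rationals — contradicting $\langle \tilde G'^t, \mathbb{R}\rangle \in D^t$. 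Carrying out the rank bookkeeping (why $D$'s anticipated members are rank-bounded in a way the new cut escapes) and verifying that $\tilde G'$ genuinely satisfies the located-left-cut axioms at the relevant node are the two places where care is needed; everything else is a routine application of the Truth Lemma and Lemma~\ref{helpfullemma2}.
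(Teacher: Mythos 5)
Your general intuition (settling down erases the would-be set of Dedekind reals, after which a generic cut escapes it) is right, but the mechanism you propose does not close the argument and aims at the wrong kind of extension. The paper's proof has three short steps: (1) the term $G$ from Section~2 is still forced to be a located left cut at every node, including after settling, where $G^s$ becomes the canonical cut of the standard real $s$; (2) no open set forces $G = \hat{r}$ for any ground model real $r$, since any open $J$ has a subinterval $K$ with some rational $s$ strictly between $r$ and $K$, and such a $K$ forces $\hat{s} \in G$ or $\neg\, \hat{s} \in G$ accordingly, separating $G$ from $\hat{r}$; (3) for any candidate term $\sigma$, $g(\sigma) = f(\sigma)^s$ is a \emph{ground model} term, so unravelling the forcing of $\in$, $J \Vdash G \in g(\sigma)$ would require some $K \subseteq J$ to force $G = \hat{r}$ for some real $r$ --- impossible by (2). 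Hence at the settled successor the Dedekind real $G$ is not a member of $g(\sigma)$, and $\sigma$ cannot name the set of Dedekind reals. The fact you never isolate is (3): the $g$-transition collapses \emph{any} term to a ground model set, whose forced members are all ground model sets, and this combines with (2) to finish immediately.

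Instead, you direct the contradiction at the $f$-extensions (``the $f$-clause then forces, at genuine (non-settled) extensions, that $D$ contains each freshly-introduced generic cut $\tilde{G}'$'') and propose to defeat $D$ there by a $V$-rank/cardinality diagonalization. This cannot work: at an $f$-extension $D$ becomes $f(D)$, which is not a ground model term and may perfectly well contain the generic --- for instance $D$ could simply include the pair $\langle G, \mathbb{R}\rangle$, so nothing forces a failure of $\tilde{G}' \in D$ at unsettled nodes. Rank is also a red herring: the same low-rank term $G$ does the job at every node, a Dedekind cut is extensionally a subset of $\mathbb{Q}$, and you never identify which clause of the forcing of $\in$ a ``high-rank'' cut would violate. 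Your auxiliary term $\tilde{G}$ with extra settling data is likewise unnecessary --- the settling of $G$ is already determined by the first components of its pairs via the definition of $\sigma^s$ --- and your own first line of attack correctly observes that the settled $\tilde{G}^s \in D^s$ yields no contradiction. The proof must pass through the $g$-successor and the non-equality of $G$ with every ground model real; as written, your argument has a genuine gap there.
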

\begin{proof}
The only axioms below, the proofs of which are essentially
different from the corresponding proofs in section 2, are Set
Induction, Strong Collection, Separation, and, of course,
Exponentiation.

\begin{itemize}
\item Infinity: As in the previous section, $\hat \omega$ will do.
(Recall that the canonical name $\hat{x}$ of any set $x \in M_n$
is defined inductively as $\{ \langle \hat{y}, \mathbb{R} \rangle
\mid y \in x \}.)$
\item Pairing: As in the previous section, given $\sigma$ and
$\tau$, $\{\langle \sigma, {\mathbb R} \rangle , \langle \tau,
{\mathbb R} \rangle \}$ will do.
\item Union: Given $\sigma$, $\{ \langle \tau, J \cap J_i \rangle
\mid$ for some $\sigma_i, \; \langle \tau, J \rangle \in \sigma_i$
and $\langle \sigma_i, J_i \rangle \in \sigma \} \cup \{ \langle
\tau, r \rangle \mid$ for some $\sigma_i, \; \langle \tau, r
\rangle \in \sigma_i$ and $\langle \sigma_i, r \rangle \in \sigma
\}$ will do.

\item Extensionality: We need to show that $\forall x \;
\forall y \; [\forall z \; (z \in x \leftrightarrow z \in y)
\rightarrow x = y]$. So let $\sigma$ and $\tau$ be any terms at a
node $r$ such that $r \models ``\forall z \; (z \in \sigma
\leftrightarrow z \in \tau)"$. We must show that $r \models
``\sigma = \tau"$. By the Truth Lemma, let $r \in J \Vdash
``\forall z \; (z \in \sigma \leftrightarrow z \in \tau)"$; i.e.
for all $r' \in J, \rho$ there is a $J'$ containing $r'$ such that
$J \cap J' \Vdash \rho \in \sigma \leftrightarrow \rho \in \tau$,
and a $J''$ containing $r'$ such that $J'' \Vdash \rho \in
\sigma^{r'} \leftrightarrow \rho \in \tau^{r'}$. We claim that $J
\Vdash ``\sigma = \tau"$, which again by the Truth Lemma suffices.

To this end, let $\langle \sigma_i, J_i \rangle$ be in $\sigma$;
we need to show that $J \cap J_i \Vdash \sigma_i \in \tau$. Let
$r'$ be an arbitrary member of $J \cap J_i$ and $\rho$ be
$\sigma_i$. By the choice of $J$, let $J'$ containing $r'$ be such
that $J \cap J' \Vdash \sigma_i \in \sigma \leftrightarrow
\sigma_i \in \tau$; in particular, $J \cap J' \Vdash \sigma_i \in
\sigma \rightarrow \sigma_i \in \tau$. It has already been
observed in \ref{equalitylemma2}, part 1, that $J \cap J' \cap J_i
\Vdash \sigma_i \in \sigma$, so $J \cap J' \cap J_i \Vdash
\sigma_i \in \tau$. By going through each $r'$ in $J \cap J_i$ and
using \ref{helpfullemma2}, part 3, we can conclude that $J \cap
J_i \Vdash \sigma_i \in \tau$, as desired. The other direction
($``\tau \subseteq \sigma"$) is analogous. Thus the first clause
in $J \Vdash ``\sigma = \tau"$ is satisfied.

The second clause is that, for each $r \in J$, $\sigma^r =
\tau^r$. That holds because $\sigma^r$ and $\tau^r$ are ground
model terms: $\sigma^r = \hat{x}$ and $\tau^r = \hat{y}$ for some
$x, y \in M_n$. If $x \not = y$, then let $z$ be in their
symmetric difference. Then no set would force $\hat{z} \in \hat{x}
\leftrightarrow \hat{z} \in \hat{y}$, contrary to the assumption
on $J$.

\item Set Induction (Schema): Suppose $r \models ``\forall x \;
((\forall y \in x \; \phi(y)) \rightarrow \phi(x))"$, where $r \in
M_n$; by the Truth Lemma, let $J$ containing $r$ force as much. We
must show $r \models ``\forall x \; \phi(x)"$; again by the Truth
Lemma, it suffices to show that $J$ forces the same.

If not, then there is a $\sigma$ and an $r \in J$ such that either
there is no $J'$ containing $r$ forcing $\phi(\sigma)$ or there is
no $J'$ containing $r$ forcing $\phi^r(\sigma)$. In $M_n$, pick
such a $\sigma$ of minimal $V$-rank. We will show that neither
option above is possible.

By the choice of $J$ and \ref{helpfullemma2} part 3, $J \Vdash
``(\forall y \in \sigma \; \phi(y)) \rightarrow \phi(\sigma)"$. If
we show that $J \Vdash ``\forall y \in \sigma \; \phi(y)"$, then
we can conclude that $J \Vdash \phi(\sigma)$, eliminating the
first option above.

Toward the first clause in forcing a universal, let $\tau$ be a
term. We claim that $J \Vdash ``\tau \in \sigma \rightarrow
\phi(\tau)"$, which suffices. Regarding the first clause in
forcing an implication, suppose $K \subseteq J$ and $K \Vdash \tau
\in \sigma$. Unraveling the definition of forcing $\in$, for each
$s \in K$, there is an $L$ containing $s$ forcing $\tau$ to equal
some term $\rho$ of lower $V$-rank than $\sigma$. By the
minimality of $\sigma$, some neighborhood of $s$ forces
$\phi(\rho)$, hence also $\phi(\tau)$ (perhaps by restricting to
$L$). By \ref{helpfullemma2} part 3, $K$ also forces $\phi(\tau)$.
Thus the first clause in $J \Vdash ``\tau \in \sigma \rightarrow
\phi(\tau)"$ is satisfied. The second clause in forcing an
implication is that, for all $r \in J$, if $\mathbb{R} \Vdash
\tau^{r} \in \sigma^{r}$ then $\mathbb{R} \Vdash
\phi^{r}(\tau^{r})$. If $\mathbb{R} \Vdash \tau^{r} \in
\sigma^{r}$, then $\tau^{r}$ is forced to be equal to some ground
model term $\hat{x}$ of lower $V$-rank than $\sigma$. By the
minimality of $\sigma$, $\mathbb{R} \Vdash \phi^{r}(\hat{x})$,
i.e. $\mathbb{R} \Vdash \phi^{r}(\tau^{r})$. Thus the first clause
in $J \Vdash ``\forall y \in \sigma \; \phi(y)"$ is satisfied.

Toward the second clause in that universal, given $\tau$ and $r
\in J$, we must find a $J'$ containing $r$ with $J' \Vdash \tau
\in \sigma^r \rightarrow \phi^{r}(\tau)$. We claim that $J$
suffices; that is, $(i)$ for all $K \subseteq J$, if $K \Vdash
\tau \in \sigma^r$ then $K \Vdash \phi^r(\tau)$, and $(ii)$ for
all $s \in J$, if $\mathbb{R} \Vdash \tau^s \in \sigma^r$ then
$\mathbb{R} \Vdash \phi^r(\tau^s)$ (using here that $(\sigma^r)^s
= \sigma^r$ and $(\phi^r)^s = \phi^r$). Regarding $(i)$, if $K
\subseteq J$ forces $\tau \in \sigma^r$, then for each $t \in K$
there is a neighborhood $L$ of $t$ forcing $\tau = \hat{x}$, for
some $x \in M_n$. If $L$ did not force $\phi^r(\hat{x})$, then, by
\ref{helpfullemma2} part 6, $\mathbb{R} \Vdash \neg
\phi^r(\hat{x})$, where $\hat{x}$ has lower rank than $\sigma$,
contradicting the choice of $\sigma$. So $L \Vdash
\phi^r(\hat{x})$, and $L \Vdash \phi^r(\tau)$. Since each $t \in
K$ has such a neighborhood, $K \Vdash \phi^r(\tau)$. Similarly for
$(ii)$: If $\mathbb{R} \Vdash \tau^s \in \sigma^r$, then $\tau^s$
has lower rank than $\sigma$; by the minimality of $\sigma$, it
cannot be the case that $\mathbb{R} \Vdash \neg \phi^r(\tau^s)$,
hence, by \ref{helpfullemma2} part 6, $\mathbb{R} \Vdash
\phi^r(\tau^s)$.

We have just finished proving that $J \Vdash ``\forall y \in
\sigma \; \phi(y)"$, and so $J \Vdash \phi(\sigma)$. Hence the
first option provided by the minimality of $\sigma$ is not
possible. The other option is that for some $r \in J$ there is no
$J'$ containing $r$ forcing $\phi^r(\sigma)$. We will show this
also is not possible.

Fix $r \in J$. By the choice of $J$ (using the second clause in
the definition of forcing $\forall$), there is a $J'$ containing
$r$ such that $J' \Vdash ``(\forall y \in \sigma \; \phi^r(y))
\rightarrow \phi^r(\sigma)"$. If we show that $J' \Vdash ``\forall
y \in \sigma \; \phi^r(y)$, then we can conclude $J' \Vdash
\phi^r(\sigma)$, for our contradiction.

Toward the first clause in forcing a universal, let $\tau$ be a
term. We claim that $J' \Vdash ``\tau \in \sigma \rightarrow
\phi^r(\tau)"$, which suffices. Regarding the first clause in
forcing an implication, suppose $K \subseteq J'$ and $K \Vdash
\tau \in \sigma$. We need to show $K \Vdash \phi^r(\tau)$. It
suffices to find a neighborhood of each $s \in K$ forcing
$\phi^r(\tau)$. So let $s \in K$. Unraveling the definition of
forcing $\in$, there is a $K'$ containing $s$ forcing $\tau$ to
equal some term $\rho$ of lower $V$-rank than $\sigma$. Shift (as
in the proof of \ref{helpfullemma2} part 5) by $r - s$. Since the
parameters in $\phi^r$ are all ground model terms, $shift_{r -
s}(\phi^r) = \phi^r$. Also, $rk(shift_{r - s}\rho) = rk(\rho) <
rk(\sigma)$. By the minimality of $\sigma$, there is some
neighborhood $L$ of $r$ forcing $\phi^r(shift_{r - s}\rho)$.
Shifting back, we get $shift_{s-r}(L)$ containing $s$ and forcing
$\phi^r(\rho)$. Then $s \in K' \cap shift_{s-r}(L) \Vdash
\phi^r(\tau)$, as desired.

The second clause in forcing an implication is that, for all $s
\in J'$, if $\mathbb{R} \Vdash \tau^s \in \sigma^s$ then
$\mathbb{R} \Vdash \phi^{r}(\tau^s)$. If $\mathbb{R} \Vdash \tau^s
\in \sigma^s$, then $\tau^s$ is forced to be equal to some ground
model term $\hat{x}$ of lower $V$-rank than $\sigma$. By the
minimality of $\sigma$, it cannot be the case that $\mathbb{R}
\Vdash \neg \phi^{r}(\hat{x})$, so, by \ref{helpfullemma2} part 6,
$\mathbb{R} \Vdash \phi^{r}(\hat{x})$, i.e. $\mathbb{R} \Vdash
\phi^{r}(\tau^s)$. Thus the first clause in $J \Vdash ``\forall y
\in \sigma \; \phi(y)"$ is satisfied.

Toward the second clause in that universal, given $\tau$ and $s
\in J'$, we must find a neighborhood of $s$ forcing $\tau \in
\sigma^s \rightarrow \phi^{r}(\tau)$. We claim that $J'$ suffices;
that is, $(i)$ for all $K \subseteq J'$, if $K \Vdash \tau \in
\sigma^s$ then $K \Vdash \phi^{r}(\tau)$, and $(ii)$ for all $t
\in J'$, if $\mathbb{R} \Vdash \tau^t \in \sigma^s$ then
$\mathbb{R} \Vdash \phi^{r}(\tau^t)$. For $(i)$, if $K \subseteq
J'$ forces $\tau \in \sigma^s$, then for each $t \in K$ there is a
neighborhood $L$ of $t$ forcing $\tau = \hat{x}$, for some $x \in
M_n$. If $L$ did not force $\phi^{r}(\hat{x})$, then, by
\ref{helpfullemma2} part 6, $\mathbb{R} \Vdash \neg
\phi^r(\hat{x})$, where $\hat{x}$ has lower rank than $\sigma$,
contradicting the choice of $\sigma$. So $L \Vdash
\phi^r(\hat{x})$, and $L \Vdash \phi^r(\tau)$. Since each $t \in
K$ has such a neighborhood, $K \Vdash \phi^r(\tau)$. Similarly for
$(ii)$: If $\mathbb{R} \Vdash \tau^t \in \sigma^s$, then $\tau^t$
has lower rank than $\sigma$; by the minimality of $\sigma$, it
cannot be the case that $\mathbb{R} \Vdash \neg \phi^r(\tau^t)$,
hence, by \ref{helpfullemma2} part 6, $\mathbb{R} \Vdash
\phi^r(\tau^t)$.

\item Exponentiation:
Let $\sigma$ and $\tau$ be terms at node $r$. Let $C$ be $\{
\langle \rho, J \rangle \mid rk(\rho) < \max(rk(\sigma), rk(\tau))
+ \omega$, and $J \Vdash \rho : \sigma \rightarrow \tau$ is a
function$\} \cup \{ \langle \hat{h}, s \rangle \mid h : \sigma^s
\rightarrow \tau^s$ is a function$\}.$ (The restriction on $\rho$
is so that $C$ is set-sized.) We claim that $C$ suffices.

Let $s$ be any immediate extension of $r$. (The case of
non-immediate extensions follows directly from this case.) If $s
\models ``\rho : f(\sigma) \rightarrow f(\tau)$ is a function",
then $s \models \rho \in f(C)$ by the first clause in the
definition of $C$. If $s \models ``\rho : g(\sigma) \rightarrow
g(\tau)$ is a function" and $\rho$ is a ground model term, then $s
\models \rho \in g(C)$ by the second clause. What we must show is
that for any node $r$ and sets $X$ and $Y$, if $r \models ``\rho :
\hat{X} \rightarrow \hat{Y}$ is a function", then some
neighborhood of $r$ forces $\rho$ equal to a ground model
function.

By the Truth Lemma, let $r \in J \Vdash ``\rho : \hat{X}
\rightarrow \hat{Y}$ is a function". We claim that for all $x \in
X$ there is a $y \in Y$ such that for each $s \in J \; s \models
\rho(\hat{x}) = \hat{y}$. If not, let $x$ be otherwise. Let $y$ be
such that $r \models \rho(\hat{x}) = \hat{y}$. For each immediate
successor $s$ of $r, \; s \models f(\rho)(f(\hat{x})) =
f(\hat{y})$. By overspill the same holds for some neighborhood
around $r$ (sans the $f$'s). If this does not hold for all $s \in
J$, let $s$ be an endpoint in $J$ of the largest interval around
$r$ for which this does hold. Repeating the same argument around
$s$, there is a $y'$ such that, for all $t$ in some neighborhood
of $s, \; t \models \rho(\hat{x}) = \hat{y'}$. This neighborhood
of $s$ must overlap that of $r$, though. So $y = y'$,
contradicting the choice of $s$. So the value $\rho(\hat{x})$ is
fixed on the whole interval $J$, and $\rho$ is forced by $J$ to
equal a particular ground model function.

\item Separation: Although CZF contains only $\Delta_0$
Separation, full Separation holds here. Let $\phi(x)$ be a formula
and $\sigma$ a term. Then $\{ \langle \sigma_i, J \cap J_i \rangle
\mid \langle \sigma_i, J_i \rangle \in \sigma$ and $J \Vdash
\phi(\sigma_i) \} \cup \{ \langle x, s \rangle \mid \langle x,
\mathbb{R} \rangle \in \sigma^s$ and ${\mathbb R} \Vdash \phi^s(x)
\}$ will do.

\item Strong Collection: If $r \models \forall x \in \sigma \;
\exists y \; \phi(x,y)$, let $r \in J$ force as much. For each
$\langle \sigma_i, J_i \rangle \in \sigma$ and $s \in J \cap J_i$,
let $\tau_{i,s}$ and $J_{i,s}$ be such that $s \in J_{i,s} \Vdash
\phi(\sigma_i, \tau_{i,s})$. Also, for each $s \in J$ and $\langle
x, \mathbb{R} \in \sigma^s$, let $\tau_{x,s}$ be such that some
neighborhood of $s$ forces $\phi^s(x, \tau_{x,s})$. (Notice that,
by \ref{helpfullemma2} part 5, $\mathbb{R} \Vdash \phi^s(x,
\tau_{x,s}^s)$.) Then $\{ \langle \tau_{i,s}, J_{i,s} \rangle \mid
i \in I, s \in J \} \cup \{ \langle \tau_{x,s}, s \rangle \mid s
\in J, x \in \sigma^s \}$ suffices.

\end{itemize}

\end{proof}
\begin{proof}
First note that the same generic term from the last section, $G :=
\{ \langle \hat{r}, J \rangle \mid  r$ is a rational, $J$ is an
open interval from the reals, and $r < J \}$, still defines a
Dedekind cut. In fact, half of the proof of such is just the
argument from the last section itself. That's because most of the
properties involved with being a Dedekind real are local. For
instance, if $s < t$ are rationals at any given node, then it must
be checked at that node whether $s \in G$ or $t \not \in G$. For
this, the earlier arguments work unchanged. The same applies to
images of $G$ at later nodes, as long as such image satisfies the
same definition, i.e. is of the form $f(G)$. We must check what
happens when $G$ settles down. Cranking through the definition,
$G^s = \{ \langle \hat{r}, \mathbb{R} \rangle \mid r < s \}$,
which is the Dedekind cut standing for $s$, and which satisfies
all the right properties.

Furthermore, although $G$ can become a ground model real, it isn't
one itself: there are no $J$ and $r$ such that $J \Vdash G =
\hat{r}$. That's because there is a $K \subseteq J$ with either $r
< s < K$ or $K < s < r$ (some $s$). In the former case, $K \Vdash
\hat{s} \in G$, i.e. $K \Vdash \hat{r} < G$; in the latter, $K
\Vdash \neg \hat{s} \in G$, i.e. $K \Vdash \hat{r} > G$.

Finally, to see that the Dedekind reals do not form a set, let
$\sigma$ be a term at any node. $g(\sigma$) is a ground model
term. So if any $J \Vdash G \in g(\sigma)$, then for some $K
\subseteq J$, $K \Vdash G = \hat{r}$ for some real $r$, which we
just saw cannot happen. So $\sigma$ cannot name the set of
Dedekind reals.
\end{proof}

Not infrequently, when some weaker axioms are shown to hold, what
interests people is not why the weaker ones are true but why the
stronger ones aren't. The failure of Subset Collection, and hence
of Power Set too, is exactly what the previous paragraph is about,
but perhaps the failure of Power Set is more clearly seen on the
simpler set 1 = $\{0\} = \{ \emptyset \}$. After applying a
settling down transition function $g$ to a set $\sigma$, the only
subsets of 1 in $g(\sigma)$ are 0 and 1. But in $M$ 1 has more
subsets than that. For instance, at node $r$, $\{ \langle
\emptyset, J \rangle \mid \min J > r\}$ is 0 at all future nodes
$s < f(r)$ and 1 at all future nodes $s > f(r)$. So $\sigma$ could
never have been the power set of 1 to begin with, because
$g(\sigma)$ is missing some such subsets.

This same example also shows that Reflection fails: that the power
set of 1 does not exist is true in $M$, as above, but not true
within any set, as once that set settles down, $\{0, 1\}$ is
indeed the internal power set of 1.

\underline{\bf Comments and Questions} The settling down process,
as explained when introduced, was motivated by the construction in
\cite{RSL1}. The change in the terms (adding members based not on
open sets but on individual real numbers, to be used only when
settling down) was quickly seen to be necessary to satisfy
Separation. But where does the unusual topological semantics come
from? The topology is the same: the space is still $\mathbb{R}$,
the only things that force statements are the same open sets as
before; it's just a change in the meaning of the forcing relation
$\Vdash$, the semantics. It is no surprise that there would have
to be some change, in the base cases (= and $\in$) if nowhere
else. But why exactly those changes as presented in the inductive
cases? The authors found them through a bothersome process of
trial and error, and have no explanation for them.

Would this new semantics have interesting applications elsewhere?
As an example of a possible kind of application, consider the
topology of this article. Under the standard semantics, there is a
Dedekind cut which is not a Cauchy sequence. With the new
semantics, the collections of Dedekind and Cauchy reals differ
(the latter being a set and the former not), but not for that
reason. In fact, any Dedekind cut is not not equal to a Cauchy
real: just apply a settling down transition. These collections
differ because the Dedekind reals include some things that are
just not yet Cauchy reals. So this semantics might be useful for
gently separating concepts, getting sets (or classes) of things to
be unequal without producing any instance of one which is not the
other.

Finally, which axioms does this new semantics verify? For
instance, in \cite{RSL3} a topological model for a generic Cauchy
sequence is given. Analogously with the current construction,
wherein a generic Dedekind cut in a model with settling down
implies that there is no set of Dedekind cuts, in a model with
settling down and a generic Cauchy sequence the Cauchy sequences
are not a set. That would mean that not even Exponentiation holds.
So what does hold generally under this semantics?

\end{document}